\numberwithin{equation}{section}%
\title[Threshold condensation to singular support]{Threshold condensation to singular support\\for a Riesz equilibrium problem}
\author{Djalil Chafaï}%
\address[DC]{DMA, École normale supérieure -- PSL, 45 rue d'Ulm, %
  F-75230 Cedex 5 Paris, France}%
\email{\url{mailto:djalil(at)chafai.net}}%
\urladdr{\url{http://djalil.chafai.net/}}%
\author{Edward B. Saff}%
\address[ES]{Center for Constructive Approximation, %
  1326 Stevenson Center, Vanderbilt University, Nashville, TN 37240, USA}%
\email{\url{mailto:Ed.Saff@Vanderbilt.Edu}}%
\urladdr{\url{https://my.vanderbilt.edu/edsaff/}}%
\author{Robert S.\ Womersley}%
\address[RW]{School of Mathematics and Statistics, %
  University of New South Wales, %
  Sydney NSW 2052, Australia}%
\email{\url{mailto:R.Womersley@unsw.edu.au}}%
\urladdr{\url{https://web.maths.unsw.edu.au/~rsw/}}%
\date{Spring 2022, revised Autumn 2022, Winter 2023, compiled \today}
\keywords{Potential theory; Equilibrium measure; Variational analysis;
  Funk\,--\,Hecke formula; Gegenbauer orthogonal polynomial; Hypergeometric
  function}
\subjclass[2000]{%
  31B10; 
  31A10; 
  44A20; 
  33C20. 
}
\newtheorem{theorem}{Theorem}[section]%
\newtheorem{lemma}[theorem]{Lemma}%
\newtheorem{proposition}[theorem]{Proposition}%
\theoremstyle{remark}
\newtheorem{remark}[theorem]{Remark}%
\newcommand{\DOT}[2]{#1\cdot#2}
\newcommand{\SBRA}[1]{\left[#1\right]}
\newcommand{\PAR}[1]{\left(#1\right)}
\newcommand{\ABS}[1]{\left|#1\right|}
\newcommand{\dd}{\mathrm{d}}
\newcommand{\dS}{\mathbb{S}}
\newcommand{\dR}{\mathbb{R}}
\newcommand{\Rd}{\dR^d}
\newcommand{\sph}{\dS^{d-1}}
\newcommand{\prob}{\mathcal{M}_1(\Rd)}
\newcommand{\ds}{\displaystyle}
\newcommand{\ind}{\mathbf{1}}
\newcommand{\meq}{\mu_{\mathrm{eq}}}
\newcommand{\hgp}[2]{{}_{#1}F_{#2}} 
\newcommand{\hg}{\hgp{2}{1}}
\newcommand{\xh}{\hat{x}}
\newcommand{\yh}{\hat{y}}
\def\@MRExtract#1 #2!{#1}     
\renewcommand{\MR}[1]{
  \xdef\@MRSTRIP{\@MRExtract#1 !}%
  \href{http://www.ams.org/mathscinet-getitem?mr=\@MRSTRIP}{MR-\@MRSTRIP}}
\begin{document}
\begin{abstract}
  We compute the equilibrium measure in dimension $d=s+4$ associated to a
  Riesz $s$-kernel interaction with an external field given by a power of the
  Euclidean norm. Our study reveals that the equilibrium measure can be a
  mixture of a continuous part and a singular part. Depending on the value of
  the power, a threshold phenomenon occurs and consists of a dimension
  reduction or condensation on the singular part. In particular, in the
  logarithmic case $s=0$ ($d=4$), there is condensation on a sphere of special radius 
  when the power of the external field becomes quadratic. This contrasts with the case $d=s+3$ studied
  previously, which showed that the equilibrium measure is fully dimensional
  and supported on a ball. Our approach makes use, among other tools, of the
  Frostman or Euler\,--\,Lagrange variational characterization, the
  Funk\,--\,Hecke formula, the Gegenbauer orthogonal polynomials, and
  hypergeometric special functions.
\end{abstract}
\maketitle
%


\section{Introduction and main results}

In the present work, we determine the equilibrium measure in $\Rd$ associated
with a Riesz $s$-kernel interaction with $s = d-4$, and an external field
given by a power of the Euclidean norm, namely
$\gamma\left|\cdot\right|^\alpha$, $\alpha>0$, $\gamma>0$. The covered cases
are $(d,s)\in\{(3,-1),(4,0),(5,1),\ldots\}$. 

Unlike the Coulomb case $s=d-2$, our main result (Theorem
\ref{th:main} below) reveals that the equilibrium measure can be a mixture of
a continuous part and a singular part. Furthermore, as the power $\alpha$ in
the external field increases to $2$, a transition occurs where the support of
the equilibrium measure reduces from a full $d$-dimensional ball to a $d-1$
dimensional sphere. Moreover, for powers $\alpha$ larger than $2$, the
equilibrium measure continues to be the uniform distribution on a $d-1$
dimensional sphere with an explicit special radius. In particular, this holds
for the logarithmic case $s=0$, $d=4$ and contrasts with the cases $s=d-2$ and $s=d-3$
(studied in \cite{potspe}\footnote{See also arXiv:2108.00534v6 which contains an analytic derivation of the Riesz formula for the
equilibrium measure on a ball using a special case of a result of Dyda et al~\cite{MR3640641}.}) and for which the equilibrium measures are fully
dimensional and supported on a ball for $\alpha = 2$.

It is known that a condensation phenomenon may occur for an equilibrium
measure when the Riesz parameter $s$ passes through a critical value. For
example, the equilibrium problem for the Riesz $s$-kernel on a disc in
$\mathbb{R}^2$ with no external field has support which transitions from the
full disc for $2> s > 0$ to the boundary circle for $0 \geq s > -2$, see for
instance \cite{MR78470,MR3970999,MR0350027}. In the present work, we exhibit a
new condensation phenomenon that occurs for a fixed Riesz $s$ parameter, when
the external field power passes through a critical value. Our model is
relatively simple, multivariate but radial. For further discussion of
equilibrium problems with external fields, see for instance
\cite{BalagueCarrilloLaurentRaoul2013,BilykGMPV2021,CanizoCarrilloPatacchine2015,
  CarrilloFigalliPatacchini2017,GutlebCarrilloOlver2022Balls,GutlebCarrilloOlver2021}.

\subsection{Riesz $s$-energy with an external field in $\Rd$}

For all $d\in\{1,2,\ldots\}$, and $x\in\Rd$, we write
$\ABS{x}:=(x_1^2+\cdots+x_d^2)^{1/2}$. We take $s\in(-2,+\infty)$ and for all
$x\in\Rd$, $x\neq 0$, we define the ``kernel''
\begin{equation}\label{eq:VW}
  K_s(x):=
  \begin{cases}
    \mathrm{sign}(s)\ABS{x}^{-s} & \text{if $-2<s<0$ or $s>0$}\\
    -\log\ABS{x} & \text{if $s=0$}
  \end{cases},
\end{equation}
known as the ``Riesz $s$-kernel'', and as the Coulomb or Newton kernel when
$s=d-2$. It is well known that for all integers $d\geq1$, the Coulomb kernel
$K_{d-2}$ is the fundamental solution of the Laplace or Poisson equation in
$\mathbb{R}^d$; in other words, in the sense of Schwartz distributions in
$\mathbb{R}^d$, we have $-\Delta K_{d-2}=c_d\delta_0$, where
$\Delta:=\sum_{i=1}^d\partial_i^2=\mathrm{Trace}(\mathrm{Hessian})$ is the
Laplacian and where $\delta_0$ is the Dirac unit point mass at the origin. The
constant is known explicitly, namely\footnote{\label{FN:Ks}An alternative
  non-standard definition of $K_s$ would be $K_s=1/(s\ABS{\cdot}^s)$ if
  $s\neq0$. This gives $K_0$ from $K_s$ by removing the singularity as
  $s\to0$, namely
  $\lim_{s\to0}(1/(s|x|^s)-1/s) = \lim_{s\to0}(|x|^{-s}-1)/(s-0)=-\log|x|$.
  This produces nicer formulas in general, for instance $c_d$ would be simply
  equal to $|\sph|$ for all $d\geq1$ with this choice.}
\begin{equation} \label{eq:cd}
  c_2=2\pi\quad\text{while}\quad
  c_d=
  \ds(d-2)|\sph|=(d-2)\frac{2\pi^{\frac{d}{2}}}{\Gamma\bigr(\frac{d}{2}\bigr)}
  \quad\text{if $d\neq2$}.
\end{equation}
Let $V:\Rd\to(-\infty,+\infty]$ be lower semi-continuous and such that
\begin{equation}\label{eq:Vinf}
  \inf_{x\neq y}(K_s(x-y)+V(x)+V(y))>-\infty.
\end{equation}
In this work, we focus on
\begin{equation}\label{eq:Vdef}
  V = \gamma\left|\cdot\right|^\alpha, \qquad \gamma > 0, \quad \alpha>0,
\end{equation}
and we  note that \eqref{eq:Vinf} is satisfied for $s\geq0$ and
when $|s|<\alpha$ for $s<0$.

Let $\mathcal{M}_1(\Rd)$ be the set of probability measures on $\Rd$. For all
$\mu\in\mathcal{M}_1(\Rd)$, we define
\begin{equation}\label{eq:Esalpha}
  \mathrm{I}(\mu)=\mathrm{I}_{s,V}(\mu)
  :=\iint_{\Rd\times\Rd}(K_s(x-y)+V(x)+V(y))\mu(\dd x)\mu(\dd y),
\end{equation}
the ``energy with external field $V$'' of $\mu$. Thanks to \eqref{eq:Vinf},
the integrand is bounded below and thus the double integral is well defined
but possibly infinite.

The function $\mathrm{I}$ is strictly convex\footnote{If $s\leq-2$, then we
  lose strict convexity and uniqueness, but still it is possible to
  characterize minimizers, see \cite{MR78470}.} on $\mathcal{M}_1(\Rd)$, see
\cite[Lem.~3.1]{MR3262506} and \cite[Th.~4.4.5]{MR3970999} for $0 < s < d$ and
\cite[Th.~4.4.8]{MR3970999} for $-2 < s \leq 0$. Moreover, if we equip
$\mathcal{M}_1(\Rd)$ with the topology of weak convergence with respect to
continuous and bounded test functions (weak-$*$ convergence), then
$\mathrm{I}$ is lower semi continuous with compact level sets. In particular,
it has a unique global minimizer $\meq=\mu_{s,V}\in\prob$, called the
``equilibrium measure'':
\begin{equation}\label{eq:equ}
  \mathrm{I}(\meq)= \min_{\mu\in\prob}\mathrm{I}(\mu)>-\infty
  \quad\text{and}\quad
  \mathrm{I}(\mu)>\mathrm{I}(\meq)\quad\text{for all $\mu\neq\meq$.}
\end{equation}
The condition $s>-2$ ensures conditional strict positivity of the kernel,
giving strict convexity of $\mathrm{I}$ and uniqueness of $\meq$, see
\cite{MR78470}, \cite[Sec.\ 4.4]{MR3970999}, and \cite[Ch.\ VI, p.\
363--]{MR0350027}. Note that when $s<0$, then $K_s$ is not singular, and as a
consequence we could have $\mathrm{I}(\mu)<\infty$ for a probability measure
$\mu$ with Dirac masses; in particular $\meq$ may have Dirac masses. In
contrast, when $s\geq0$ then $K_s$ is singular, and $\mathrm{I}(\mu)=+\infty$
if $\mu$ has Dirac masses; in particular $\meq$ does not have Dirac masses.

We consider hereafter only measures $\mu\in\mathcal{M}_1(\Rd)$ such that
$\int_{|x|>1} |K_s(x)| \mu(\dd x)<\infty$. Then the potential of $\mu$
is
\begin{equation}\label{eq:U}
  U^\mu(x):=U^\mu_s(x):=\int K_s(x-y)\mu(\dd y)=(K_s*\mu)(x)
  \in (-\infty,+\infty],
\end{equation}
which is finite almost everywhere in $\Rd$, see \cite[Section I.3]{MR0350027}.
The equilibrium measure $\meq$ has an Euler\,--\,Lagrange variational
characterization known as the Frostman conditions: it is the unique
probability measure for which there exists a constant $c$ such that the
modified potential
\begin{equation}\label{eq:eulerlagrange}
  U^{\meq}+V
  \begin{cases}
    =c& \text{ q.e. on $\mathrm{supp}(\meq)$}\\
    \geq c&\text{ q.e. on $\mathbb{R}^d$}.
  \end{cases}
\end{equation}
Here ``q.e.'' denotes ``quasi--everywhere'' which means except on a set for
which every probability measure supported on it has infinite energy. These
conditions hold everywhere when $V$ is continuous.

\begin{remark}[Degenerate or special cases]\label{rm:degen}
  Let $-2<s<0$, $d \geq 1$ and $V = \gamma\left|\cdot\right|^\alpha$,
  $\gamma>0$, $\alpha>0$.
  \begin{itemize}
  \item If $\alpha=-s$ and $\gamma\geq1$, then $\meq=\delta_0$ (this holds in
    particular when $\alpha=-s=\gamma=1$).
  \item If $\alpha = -s$ and $\gamma < 1$ or if $\alpha < -s$, then $\meq$
    does not exist (and \eqref{eq:Vinf} is not satisfied).
  \item If $\alpha>-s$, then $\meq\neq\delta_0$ (and \eqref{eq:Vinf} is satisfied).
  \end{itemize}
  Let us give a brief proof of these statements. We first observe that, for
  $x\in\mathbb{R}^d$, the modified potential is
  $U^{\delta_0}(x) + V(x) = -|x|^{-s} + \gamma |x|^\alpha$, which for
  $\lambda:=|x|$ becomes
  $\varphi(\lambda):=-\lambda^{-s}+\gamma\lambda^\alpha$. We now use the
  Frostman conditions \eqref{eq:eulerlagrange} to treat, in turn, the bullet
  points above. If $\alpha=-s$ then,
  $\varphi(\lambda)=(\gamma-1)\lambda^\alpha$ and the Frostman conditions
  \eqref{eq:eulerlagrange} hold when $\gamma\geq1$. If $\alpha = -s$ and
  $\gamma < 1$ or if $\alpha<-s$, then
  $\lim_{\lambda\to\infty}\varphi(\lambda)=-\infty$ and the Frostman
  conditions \eqref{eq:eulerlagrange} cannot hold. Finally, when $\alpha>-s$,
  $\varphi(0)=0$, and $\varphi'(\lambda)=0$ has the unique solution
  $\lambda_*=(\frac{-s}{\gamma\alpha})^{\frac{1}{\alpha+s}}>0$. Now
  $\varphi(\lambda)<0$ if and only if
  $\lambda<(\frac{1}{\gamma})^{\frac{1}{\alpha+s}}$, so
  $\varphi(\lambda_*) < 0$, which contradicts the Frostman conditions
  \eqref{eq:eulerlagrange}; hence $\meq\neq\delta_0$.
\end{remark}

\subsection{Threshold phenomena for $s = d-4$}

Our main result below reveals threshold phenomena, when $d=s+4$, at
$\alpha=2$, $\alpha=1$, and $\gamma=1$. We use the following notation:
\begin{itemize}
\item $\sph_R:=\{x\in\mathbb{R}^d:|x|=R\}$, sphere of radius $R$ in
  $\mathbb{R}^d$ centered at the origin, and $\sph:=\sph_1$;
\item $\sigma_{R}$ : uniform probability measure on $\sph_R$;
\item $m_d$ : Lebesgue measure on $\mathbb{R}^d$.
\end{itemize}

\begin{theorem}[Main result]\label{th:main}
  Let $V=\gamma \ABS{\cdot}^\alpha$ where $\gamma>0$ and $\alpha>0$.
  \begin{enumerate}[label=(\roman*)]
  \item \label{it:thma}
  Suppose that $d \geq 4$ and $s = d-4 \geq 0$.
    \begin{enumerate}[label=(\alph*)] \label{it:ball}
    \item If $0 < \alpha < 2$, then 
      \begin{equation}\label{eq:mixture}
        \meq=\beta f m_d+(1-\beta)\sigma_{R},
      \end{equation}
      where
      \begin{equation}\label{eq:betaf}
        \beta:=\frac{2-\alpha}{s+2},\quad
        f(x):=\frac{\alpha+s}{R^{\alpha+s}|\sph|}|x|^{\alpha-4}\mathbf{1}_{|x|\leq
          R}
      \end{equation}
      and
      \begin{equation}\label{eq:Rball}
        R:=
        \begin{cases}
          \PAR{\frac{2|s|}{(\alpha+s+2)\gamma\alpha}}^{\frac{1}{\alpha+s}}
          &\text{if $s\neq0$}\\
          \PAR{\frac{2}{(\alpha+2)\gamma\alpha}}^{\frac{1}{\alpha}}
          &\text{if $s=0$}
        \end{cases}.
      \end{equation}
    \item \label{it:sphere} If $\alpha\geq2$, then $\meq=\sigma_{R}$
      where
      \begin{equation}\label{eq:Rsph}
        R:=
        \begin{cases}
          \PAR{\frac{2|s|}{(s+4)\gamma\alpha}}^{\frac{1}{\alpha+s}}
          &\text{if $s\neq0$}\\
          \PAR{\frac{1}{2\gamma\alpha}}^{\frac{1}{\alpha}}
          &\text{if $s=0$}
        \end{cases}.
      \end{equation}
      Moreover, when $s=0$ this remains the equilibrium measure for all $d\geq4$.
    \end{enumerate}
  \item \label{it:thmb}
  Suppose that $d = 3$ and $s = d - 4 = -1$.
    \begin{enumerate}[label=(\alph*)]
    \item \label{it:pt3}
      If $\alpha = 1$, and $\gamma\geq 1$, then $\meq=\delta_0$
      (this holds in particular when $\alpha=\gamma=1$).
    \item \label{it:ball3}
      If $1 < \alpha < 2$, then $\meq$ is the mixture given by \eqref{eq:mixture}, \eqref{eq:betaf} and \eqref{eq:Rball}.
    \item \label{it:sph3}
      If $\alpha \geq 2$, then $\meq=\sigma_{R}$ with $R$ given by \eqref{eq:Rsph}.
    \end{enumerate}
  \end{enumerate}
\end{theorem}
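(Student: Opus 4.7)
The plan is to exploit uniqueness of $\meq$ via the Frostman conditions \eqref{eq:eulerlagrange}: in each regime we exhibit a candidate $\mu_*\in\prob$, verify that $U^{\mu_*}+V\equiv c$ on $\mathrm{supp}(\mu_*)$ and $\geq c$ on $\Rd$, and then strict convexity of $I$ together with \eqref{eq:equ} forces $\meq=\mu_*$. Case (ii)(a) is immediate from Remark \ref{rm:degen}. In all other cases the candidate is rotation-invariant and $V$ is radial, so $\psi(r):=(U^{\mu_*}+V)(x)$ depends only on $r=|x|$ and the verification reduces to a one-variable problem on $[0,\infty)$.

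The core technical ingredient is the radial profile $u(r,R):=U^{\sigma_R}(x)$, $|x|=r$, of the uniform sphere measure. Writing $|x-R\xi|^2=r^2+R^2-2rR\,\xh\cdot\xi$ for $\xi\in\sph$, expanding $K_s$ in Gegenbauer polynomials $C_n^{(d-2)/2}$ in the variable $\xh\cdot\xi$, and integrating term by term via the Funk\,--\,Hecke formula, yields $u(r,R)$ in closed form as a $\hg$ hypergeometric function of $(\min\{r,R\}/\max\{r,R\})^2$, with separate interior ($r<R$) and exterior ($r>R$) expressions matching continuously at $r=R$. For the density component $f(y)\propto|y|^{\alpha-4}\ind_{|y|\leq R}$, radial slicing gives
\[
  (K_s*f)(x)=|\sph|\int_0^R \rho(t)\,t^{d-1}\,u(|x|,t)\,\dd t,
\]
with $\rho(t)\propto t^{\alpha-4}$. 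The specific pairing $s=d-4$ with density exponent $\alpha-4$ is chosen precisely so that, via a Riesz-composition-type identity for the resulting hypergeometric integral, the combination $\beta(K_s*f)+(1-\beta)U^{\sigma_R}$ equals $c-\gamma|\cdot|^\alpha$ on the ball $\{|x|\leq R\}$. The normalization $\int f\,\dd m_d=1$ together with constancy of $\psi$ on $[0,R]$ then pins down $\beta$, $R$, and $c$ as in \eqref{eq:betaf}\,--\,\eqref{eq:Rball}; in the pure sphere regime $\alpha\geq 2$, $\beta=0$ and $R$ is determined by $\psi'(R)=0$, yielding \eqref{eq:Rsph}.

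The main obstacle is the exterior condition $\psi(r)\geq c$ for $r>R$, which reduces to a sharp monotonicity estimate on an exterior $\hg$ expression. This inequality together with the positivity requirement $\beta\geq 0$ in \eqref{eq:betaf} select the threshold $\alpha=2$: for $\alpha>2$ the mixture ansatz would require $\beta<0$, so the interior balance breaks and the mass concentrates on $\sph_R$. The logarithmic subcase $s=0$ in arbitrary $d\geq 4$ is handled by the same Funk\,--\,Hecke computation applied to $K_0=-\log|\cdot|$ in place of $|\cdot|^{-s}$; the integrability restriction $\alpha>1$ in (ii)(b) comes from $\int_0^R t^{\alpha+d-5}\,\dd t<\infty$ at $d=3$, and the requirement $\beta\leq 1$ becomes $\alpha\geq 1$ at $s=-1$, delineating the sub-regimes of part (ii).
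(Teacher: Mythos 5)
Your overall framework---verify the Frostman conditions \eqref{eq:eulerlagrange} for a candidate measure, reduce to a radial one-variable problem via the Funk\,--\,Hecke formula, and invoke Gegenbauer/hypergeometric expansions---matches the paper's strategy, and handling case \ref{it:thmb}\ref{it:pt3} by Remark~\ref{rm:degen} is the same shortcut. However, the technical core is not supplied, and the phrase ``Riesz-composition-type identity'' hides a gap rather than filling one. The mechanism that makes $s=d-4$ special in the paper is this: one differentiates the radial profile $\varphi(\lambda)$ once, so the sphere-potential integrand becomes $(t-\lambda)(1-t^2)^{(d-3)/2}(\lambda^2+1-2\lambda t)^{-(s+2)/2}$; with $s=d-4$ one has $(s+2)/2=(d-2)/2=:\ell$, and $(\lambda^2+1-2\lambda t)^{-\ell}=\sum_{n\geq0}C_n^{(\ell)}(t)\lambda^n$ is the generating function of exactly those Gegenbauer polynomials orthogonal with respect to the Funk\,--\,Hecke weight $(1-t^2)^{\ell-1/2}=(1-t^2)^{(d-3)/2}$. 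Since $t-\lambda$ has degree $1$, orthogonality annihilates all $n\geq2$ and $\varphi'(\lambda)$ collapses, for $\lambda<1$, to a single monomial in $\lambda$ that can be balanced against $\gamma\alpha R^\alpha\lambda^{\alpha-1}$. Your proposal to ``expand $K_s$ in $C_n^{((d-2)/2)}$'' has the wrong index for $K_s$ itself (which expands in $C_n^{(s/2)}=C_n^{((d-4)/2)}$) and does not isolate this truncation, which is the actual engine of the proof.

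Two further steps that you assert but do not argue are the harder half of the proof. First, the exterior inequality $\varphi(\lambda)\geq c$ for $\lambda>1$ requires a genuine sign analysis of $\varphi'$ (the truncation no longer yields a single monomial outside the ball), carried out separately in the paper for $s=-1$, $s=0$, and $s\geq1$. Second, the ``for all $d\geq4$'' clause of part \ref{it:thma}\ref{it:sphere} at $s=0$ cannot be obtained by the orthogonality truncation (whose Gegenbauer index is then $\ell=1$, mismatched with $(1-t^2)^{(d-3)/2}$ when $d>4$); the paper instead establishes the closed form $\varphi'(\lambda)=\tfrac{1}{2\lambda}\bigl[\tfrac{1-\lambda^2}{1+\lambda^2}\,\hg(\tfrac12,1;\tfrac{d}{2};\tfrac{4\lambda^2}{(1+\lambda^2)^2})-1\bigr]+\tfrac{\lambda^{\alpha-1}}{2}$ and proves monotonicity in $d$ of the hypergeometric factor to reduce to $d=4$. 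Your observations that $\beta\geq0$ forces $\alpha\leq2$ and, at $s=-1$, $\beta\leq1$ forces $\alpha\geq1$ correctly locate the thresholds, but a sign constraint on the \emph{ansatz} is not a verification of the Frostman inequality for $\sigma_R$ when $\alpha\geq2$, which still needs the full one-variable analysis.
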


Theorem \ref{th:main} is proved in Section \ref{se:th:main:proof}.

Let us give some observations about Theorem \ref{th:main}:
\begin{enumerate}[(i)]
\item If $d=3$, $s=-1$, $\alpha = 1$ and $0<\gamma < 1$ or $0<\alpha<1$, then
  $\meq$ does not exist and \eqref{eq:Vinf} fails.
\item The critical radius in the case $\alpha\geq2$ is also the critical
  radius for the equilibrium problem restricted to spheres, see Lemma
  \ref{le:spheres:opt}.
\item A convex combination of probability measures as in \eqref{eq:mixture} is
  known as a ``mixture''. More precisely \eqref{eq:mixture} is a mixture of
  the absolutely continuous probability measure $fm_d$ and the singular
  probability measure $\sigma_R$. Note that $fm_d$ is itself a mixture, since
  it is the law of the product $VU$ where $U$ and $V$ are independent random
  variables with $U$ uniform on the unit sphere of $\mathbb{R}^d$ and $V$
  supported in $[0,R]$ with density
  $r\mapsto\frac{(\alpha+s)}{R^{\alpha+s}}r^{\alpha+s-1}$.\label{it:itemradial}
\item If $s\to0$, we do not recover the case $s=0$, and $R$ is discontinuous
  at $s=0$. This is due to our choice of normalization with respect to $s$ of
  $K_s$, see Footnote~\ref{FN:Ks}.
\item Theorem \ref{th:main} is in accordance with the numerical experiments
  depicted in Figure \ref{fi:rad}. Note that the case $d=4$, $s=0$, the range
  $0<\alpha<1$ is less reliable numerically than the range $\alpha\geq1$,
  since in this case the radial density provided in item \ref{it:itemradial} becomes
  singular at the origin.
\item When $d=3$ and $s=-1$, the interaction is not singular at the origin,
  but is singular at infinity, producing long range interactions in the
  energy.
\item If $d=3=3+0$, $s=0$, and $\alpha\geq2$, then $\meq$ is no longer
  supported on a sphere, but rather on a $3$-dimensional ball, see
  \cite{potspe}.
\end{enumerate}

\begin{remark}[Behavior of $\meq$ with respect to $\alpha$ in Theorem
  \ref{th:main}] \label{re:cont} The equilibrium measure $\meq$ in Theorem
  \ref{th:main} is ``continuous'' with respect to $\alpha$ in the following
  sense (see Figures \ref{fi:rad} and \ref{fi:hist}):
  \begin{itemize}
  \item If $\alpha\to\infty$, then from \eqref{eq:Rsph}, $R \to 1^-$.
  \item If $\alpha\to 2^-$, then the continuous part $\beta f m_d$, where
    $\beta = (2 - \alpha)/(s+2)$, of $\meq$ in \eqref{eq:mixture} vanishes and
    we recover the formula for $\alpha=2$.
  \item If $\alpha\to1^+$ and $\gamma\geq1$, then $R\to0$ and we recover
    the fact that $\meq=\delta_0$ when $\alpha=1$ and $\gamma\geq1$.
  \item If $\alpha\to1^+$ and if $0<\gamma<1$, then $R\to\infty$ and we
    recover the fact that $\meq$ does not exist when $\alpha=1$ and
    $0<\gamma<1$.
  \item As $\alpha\to 0^+$, then from \eqref{eq:Rball}, $R\to\infty$.
  \end{itemize}
\end{remark}

We remark that in the case $-2<s<0$ ($K_s$ is not singular) and $\alpha=2$, the equilibrium problem 
\[
  \arg\min_\mu\Bigr\{ 
  -\iint|x-y|^{|s|}\mu(\mathrm{d} x)\mu(\mathrm{d} y)
  +2\int\left|x\right|^2\mu(\mathrm{d} x)\Bigr\}  
  \]
arises in steepest descent for halftoning functionals, see \cite{hertrich-graf-beinert-steidl}, and there are explicit formulas for $\mu_{\mathrm{eq}}$.

\subsection{Numerical experiments for discrete energy}
\label{S:NumExp}

  It is natural to approximate a probability measure $\mu$ on $\dR^d$ by an
  empirical measure
  $\mu_{x_1,\ldots,x_N}:=\frac{1}{N}\sum_{i=1}^N\delta_{x_i}$, for a well
  chosen configuration $x_1,\ldots,x_N$ of $N$ points in $\dR^d$, and its
  energy $\mathrm{I}(\mu)$ by
  \begin{align}\label{eq:En}
    E(\mu_{x_1,\ldots,x_N}) &:=
    \sum_{i = 1}^N \sum_{\substack{j = 1\\j\neq i}}^N
    \left(K_s(x_i-x_j) + V(x_i) + V(x_j)\right)\\
    &=
    2\Bigr( \sum_{i = 1}^N \sum_{j = i+1}^{N} K_s(x_i-x_j) +
     (N-1)\sum_{i=1}^N V(x_i)\Bigr).
  \end{align}
  The removal of the diagonal ensures that \eqref{eq:En} 
  is finite as soon as
  $x_1,\ldots,x_N$ are distinct, despite the singularity at the origin of
  $K_s$ when $s\geq0$. Actually for $s < 0$, we do not have to remove the
  diagonal and we can sum over all $i, j$, with no contributions from the kernel
  for $i = j$ and $2 N \sum_{i=1}^N V(x_i)$ for the external field.

 \begin{figure}[htbp]
  \centering
  \includegraphics[width=.60\textwidth,trim=2.5cm 8.1cm 2.5cm 8.2cm]{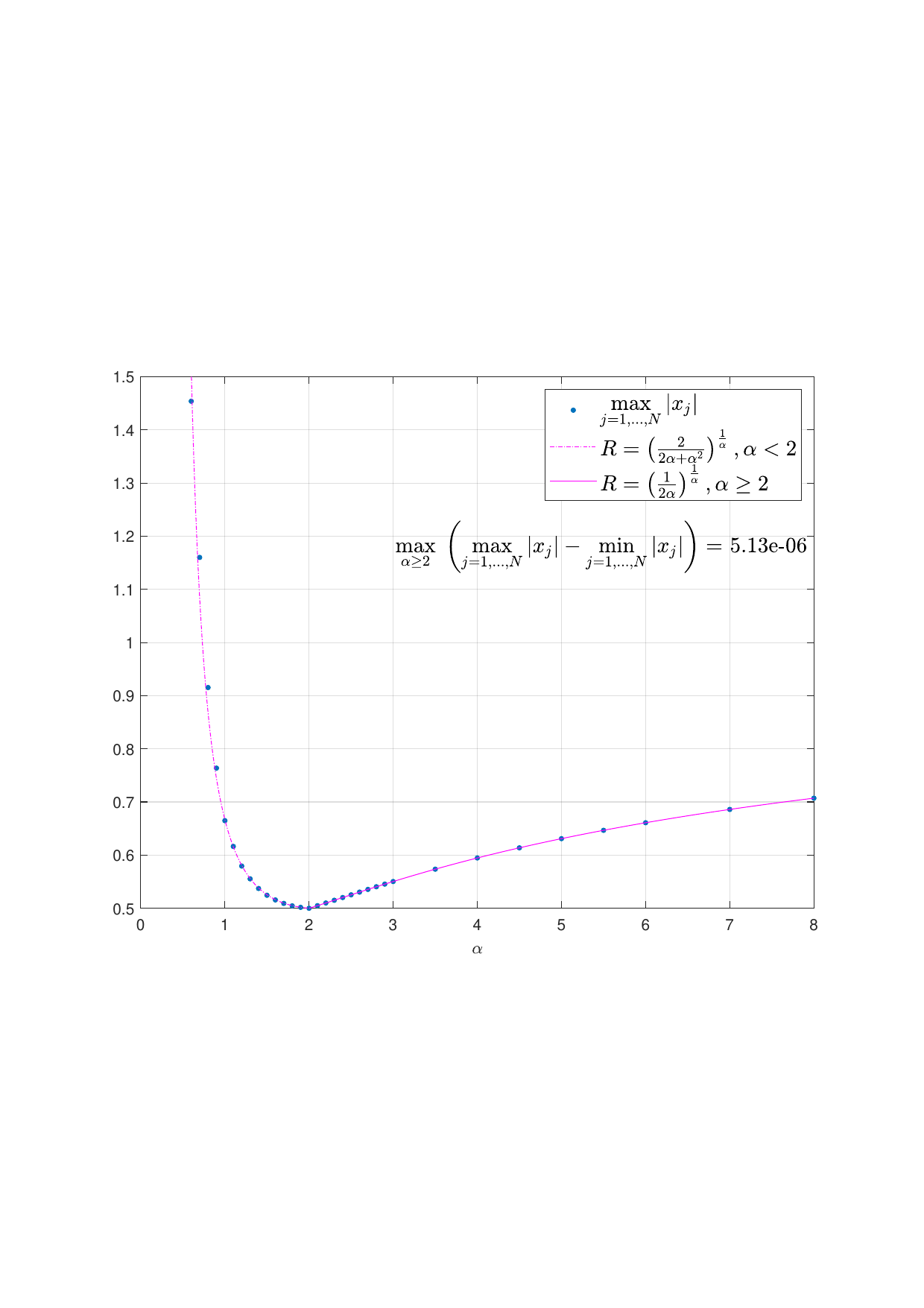}
  \caption{\label{fi:rad} Support radius $R$ of $\meq$ and
    $\displaystyle\max_{j=1,\ldots,N} |x_j|$ for empirical measure with
    $N = 10^4$ points, when $d=4$, $s=d-4=0$, and $V=\gamma\ABS{\cdot}^\alpha$
    with $\gamma=1$ and $\alpha>0$.}
\end{figure}
A local minimum of a smooth unconstrained function can be found efficiently by a
variety of gradient based descent methods, see~\cite{NocedalWright2006} for
example. Here
\begin{equation}\label{eq:GradEn}
  \nabla_{x_k} E(\mu_{x_1,\ldots,x_N}) =
    \sum_{\substack{j = 1\\j\neq k}}^N \nabla_{x_k} K_s(x_k - x_j) +
    2(N-1) \nabla_{x_k} V(x_k)
\end{equation}
with $\nabla_{x_k} V(x_k) = \alpha \gamma |x_k|^{\alpha-2}\; x_k$, and for $k \neq j$,
\begin{equation}\label{eq:GradK}
   \nabla_{x_k} K_s(x_k - x_j) =
    \begin{cases}
    |s| ^2\frac{x_k - x_j}{|x_k - x_j|^{s+2}} & \text{if $-2<s<0$ or $s>0$}\\
    -\frac{x_k-x_j}{|x_k - x_j|^2} & \text{if $s=0$}
  \end{cases}.
\end{equation}

\begin{figure}[htbp]
  \centering
  \includegraphics[width=.85\textwidth,trim=2.5cm 8.5cm 2.5cm 8cm]{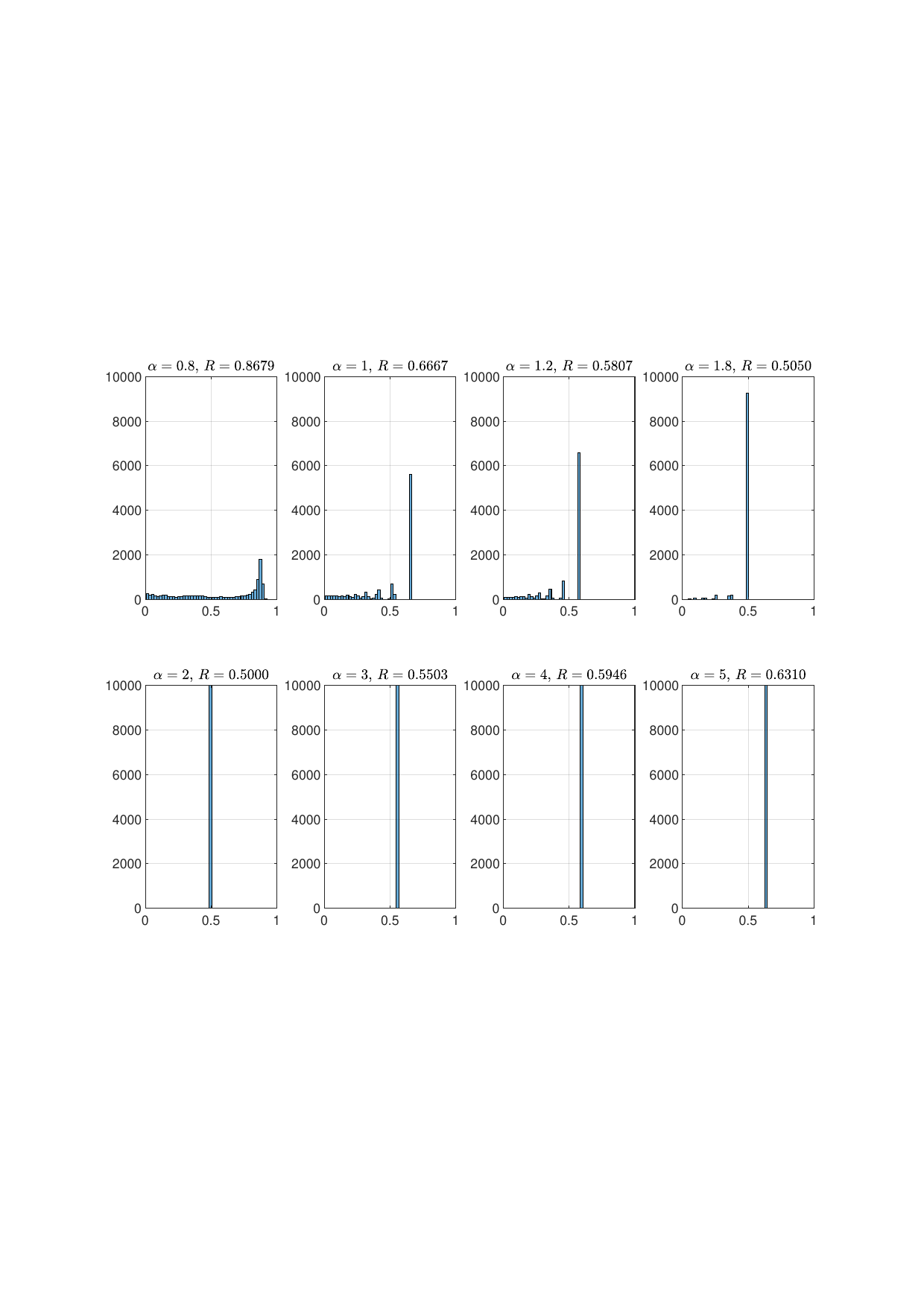}
  \caption{\label{fi:hist} Histograms of point norms
    $|x_j|, j = 1,\ldots,N=10^4$ of empirical equilibrium measure for $d=4$,
    $s=d-4=0$, and $V=\gamma\ABS{\cdot}^\alpha$ with $\gamma=1$ and selected
    $\alpha>0$.}
\end{figure}

The number of optimization variables is $n = dN$ for $N$ points in $\dR^d$,
for example $n = 40,000$ for the experiments in Figures~\ref{fi:rad} and
\ref{fi:p}, so a limited-memory BFGS
method~\cite{ByrdLuNocedalZhu1995} can avoid computations with an $n$ by $n$
Hessian approximation. Gradient information is essential due to the highly
nonlinear interactions from the Riesz kernels. 
For each fixed $N$, there can be many local minima to the discrete optimization problem, thus many different initial points were used to try to identify the global minimum.
Furthermore, the external field $V = \gamma\left|\cdot\right|_p^\alpha$ has a
derivative discontinuity at the origin for $p = 2$, $\alpha < 2$, making
discrete optimization for densities with a mass at the origin much more
difficult. Also for $p = 1$, the external field has derivative discontinuities
whenever a component of one of the points is zero.

We further remark that as $N \to \infty$ the sequence of empirical measures
$\mu_N^* := \mu_{x_1^*,\ldots,x_N^*}$ for minimizers
$\omega_N^* = (x_1^*,\ldots,x_N^*)$ of the discrete energy in \eqref{eq:En}
converges in the weak-star topology to the corresponding equilibrium measure
$\meq$ of Theorem~\ref{th:main}. Indeed, it is easy to show that thanks to the
growth of $V(x)$ at infinity, the empirical measures are all supported on a
compact set independent of $N$ and so a standard argument (see for instance
Theorem 4.2.2 of \cite{MR3970999}) going back to Choquet~\cite{Choquet1958}
and Fekete~\cite{Fekete1923} shows that any limit measure of the sequence
$\mu_N^*$ is necessarily an equilibrium (minimizing) measure of the continuous
problem in \eqref{eq:equ}. Moreover, under the assumptions of
Theorem~\ref{th:main}, this equilibrium measure is unique.

Figures~\ref{fi:rad} and \ref{fi:hist} illustrate the results of some
numerical experiments minimizing the modified potential \eqref{eq:En} using
$N = 10^4$ discrete points $x_j, j = 1,\ldots,N$ for $d = 4$, $s = 0$,
$V = \gamma\left|\cdot\right|^\alpha$, $\gamma = 1$ and various values for
$\alpha$. Figure~\ref{fi:rad} shows the strong agreement between the empirical
support radius $\max_{j=1,\ldots,N} |x_j|$ of the discrete measure and the
theoretical results in Theorem~\ref{th:main}. Figure~\ref{fi:rad} also
illustrates the continuity of the support radius at $\alpha = 2$, as discussed
in Remark~\ref{re:cont}. Figure~\ref{fi:hist} gives histograms of the discrete
measure for various $\alpha$, illustrating the change from $\alpha < 2$ when
the equilibrium measure is the mixture \eqref{eq:mixture} to $\alpha \geq 2$
when the equilibrium measure is the uniform probability density on a sphere of
radius $R$ given by \eqref{eq:Rsph}.

\section{Remarks and conjectures for general $s$ and $d$}

\subsection{More general values of $s$ and $d$}

We could ask about $\meq$ for more general values of $s$ and $d$. Here are
some remarks about a few other cases, beyond the case $d=s+4$ of Theorem
\ref{th:main} and the case $d=s+3$ and $\alpha=2$ of \cite{potspe}.

\subsubsection{Case $s=d-1$}\label{ss:s=d-1}

To the best of our knowledge, little is known when $s=d-1$. However, if
$d=1$, $s=d-1=0$ , $V=\ABS{\cdot}^\alpha$, $\alpha>0$, and following
\cite[Th.~IV.5.1]{MR1485778} (see \cite[Pro.~5.3.4]{MR1746976} for relation
to free probability), then the equilibrium measure $\meq$ is the Ullman
distribution on $\mathbb{R}$ with density
\begin{equation}\label{eq:ullman}
  x\in\mathbb{R}\mapsto\frac{\alpha}{\pi R^\alpha}
  \Bigr(\int_{|x|}^{R}\frac{t^{\alpha-1}}{\sqrt{t^2-x^2}}\dd t\Bigr)
  \mathbf{1}_{x\in[-R,R]}
  \quad\text{where}\quad
  R:=\PAR{\frac{\sqrt{\pi}}{\alpha}\frac{\Gamma\bigr(\frac{\alpha+2}{2}\bigr)}{\Gamma\bigr(\frac{\alpha+1}{2}\bigr)}}^{\frac{1}{\alpha}}.
\end{equation}
When $\alpha=2$ we recover a semicircle distribution with density
$\frac{2}{\pi}\sqrt{1-x^2}\mathbf{1}_{x\in[-1,1]}$. Note also that the formula
for the radius above has a form similar to the critical radius in the case
$s=d-3$ in \cite{potspe}.

When $d-2 < s < d$, so that the maximum principle
holds, then, one can minimize the
Riesz analogue of the Mhaskar\,--\,Saff functional (see \cite[Chap.~IV,~eq.~(1.1)]{MR1485778})
\[
  \mathcal{F}_s(A):=\iint K_s(x-y)\mu_A(\dd x)\mu_A(\dd y)+\int V(x)\mu_A(\dd x),
\]
where $A$ is a compact set of positive capacity
and $\mu_A$ is the equilibrium measure for the Riesz minimum $s$-energy problem for $A$ with no external field.
Assuming that the support $A$ of $\meq$ is a ball $B_R$, for which $\mu_{B_R}$ is known, then minimizing $\mathcal{F}_s(B_R)$ leads to the following formula for the radius of the ball
\begin{equation}\label{eq:BallR}
  R = \left(\frac{|s|\Gamma\bigr(\frac{d - s}{2}\bigr)\Gamma\bigr(\frac{\alpha + 2+s}{2}\bigr)}{\alpha\Gamma\bigr(\frac{\alpha + d}{2}\bigr)}\right)^\frac{1}{\alpha + s},\,\,\,\,\alpha+s>0.
\end{equation}

To verify this formula one can use the Frostman conditions \eqref{eq:eulerlagrange}. For $s = d-1$, $d \geq 2$ and $\alpha = 2$ the radially symmetric semicircle distribution on the ball $B_R$ is,
with $y = r R \yh$, $r \in [0, 1]$, $\yh\in \sph$,
\[
   \mu_R(\dd y) =
   M \sqrt{R^2 - |y|^2}\  \mathbf{1}_{|y|\leq R}\; \dd y =
   M R^{d+1} \sqrt{1 - r^2} \; r^{d-1} \mathbf{1}_{r\in [0, 1]}\,
          \dd r \, \sigma_1(\dd \yh),
\]
where the normalization constant $M$ satisfies
\[
  M^{-1} = R^{d+1} \int_0^1 \sqrt{1-r^2} \, r^{d-1} \dd r
         = R^{d+1} \frac{\sqrt{\pi}}{4}
           \frac{\Gamma\left(\frac{d}{2}\right)}{\Gamma\left(\frac{d+3}{2}\right)}
           = \frac{d-1}{d} \frac{\pi}{4} .
\]
Also using the parametrization $x = \lambda R \xh$, $\lambda \geq 0$, $\xh\in \sph$,
and the Funk\,--\,Hecke formula of Lemma~\ref{le:funkhecke} for the integral of the Riesz kernel over $\sph$, gives
\begin{eqnarray*}\label{eq:sd1modpot}
 U^{\mu_R}(x) & = & 
  \int_{B_R} K_s(|x - y|) \mu_R(\dd y)  
   =  R^{d-s+1} M \int_0^1 \int_{\sph} |\lambda \xh - r \yh|^{-s}  \,
 r^{d-1} \sqrt{1 - r^2} \; \sigma_1(\dd \yh) \, \dd r \\
 & = & \frac{R^2 4d}{(d-1)\pi} \int_0^1 (\lambda + r)^{1-d}
    {}_2 F_1\left(\frac{d-1}{2}, \frac{d-1}{2}; d-1;
    \frac{4\lambda r}{(\lambda + r)^2}\right) \,
    r^{d-1} \sqrt{1 - r^2} \, \dd r .
\end{eqnarray*}
The Frostman conditions
for $\mu_R$ to be the equilibrium measure
are that the modified potential $\varphi(\lambda) := U^{\mu_R}(\lambda R \xh) + (\lambda R)^2$
satisfies $\varphi(\lambda) = c$ for $\lambda \in [0, 1]$ and
$\varphi(\lambda) > c$ for $\lambda > 1$.
Here $c = R^2 \frac{d}{d-1}$ and $\varphi(\lambda) = c$ is equivalent to,
for $\lambda \in [0,1]$,
\begin{equation}\label{eq:sd1id}
  \int_0^1 (\lambda + r)^{1-d}
   {}_2 F_1\left(\frac{d-1}{2}, \frac{d-1}{2}; d-1;
    \frac{4\lambda r}{(\lambda + r)^2}\right)
    r^{d-1} \sqrt{1 - r^2} \; \dd r =
    \frac{\pi}{4} \left( 1 - \frac{d-1}{d} \lambda^2\right),
\end{equation}
which is \cite[Lem.~2.4 with $\alpha=1-d$ and $\ell=1$]{GutlebCarrilloOlver2022Balls} divided by $R^2 |\sph|$.
Moreover, numerical experiments suggest that the inequality in the Frostman conditions \eqref{eq:eulerlagrange} holds when $\lambda>1$. It is worth noting that when $d=2$, then \eqref{eq:sd1id} boils down to the following formula in the same spirit of those in \cite{potspe}:
\begin{equation}\label{eq:sd1id2}
  \int_0^1
   K\left(\frac{4\lambda r}{(\lambda + r)^2}\right)
    \frac{r\sqrt{1 - r^2}}{\lambda + r}\; \dd r =
    \frac{\pi^2}{16} \left( 2 - \lambda^2\right),
\end{equation}
where $K(z)=\dfrac{\pi}{2}\, {}_2F_1(1/2,1/2;1;z)$ is the complete Elliptic integral of the first kind, (cf.\ \cite[Eq.~(1.20)]{potspe}).

\subsubsection{Case $s=d-2$ (Coulomb)}\label{ss:s=d-2}

Let us consider the Coulomb case $s=d-2$. From
\begin{equation}\label{eq:Delta}
  \Delta K_{d-2}=-c_d\delta_0
\end{equation}
we get the inversion formula, in the sense of distributions,
\begin{equation}\label{eq:inv}
  \Delta U^\mu=-c_d\mu.
\end{equation}
When $V$ is locally integrable it can be viewed as a distribution and we
get, by combining \eqref{eq:eulerlagrange} and \eqref{eq:inv}, that $\meq$
is equal on the interior of its support, in the sense of distributions, to
the distribution
\begin{equation}\label{eq:DeltaV}
  \frac{\Delta V}{c_d}.
\end{equation}
Beware that $\meq$ is not necessarily absolutely continuous and may have a
singular part outside the interior of its support. In particular, when $V$
is $\mathcal{C}^2$ then the interior of the support of $\meq$ does not
intersect the set $\{\Delta V<0\}$.

From \cite[Prop.~2.13]{MR2647570}, when $d \geq 3$,
$V = \left|\cdot\right|^\alpha$, $\alpha > 0$ (so $\gamma = 1$) the support
of the equilibrium measure $\meq$ is a ball of radius $R$, where
\begin{equation}\label{eq:s=d-2R}
  R = \left( \frac{d-2}{\alpha} \right)^\frac{1}{d+\alpha-2}
\end{equation}
and
\begin{equation}\label{eq:s=d-2meq}
  \dd\meq(r \xh) = \frac{\alpha (d+\alpha-2)}{d-2}\; r^{d+\alpha-3}\;
  \ind_{0\leq r\leq R} \; \dd r\; \sigma_1(\dd\xh) .
\end{equation}
When $d = 2$, so $s = 0$, \cite[Thm.~IV.6.1]{MR1485778} shows that
\begin{equation}\label{eq:d=2s=0R}
  R = \left( \frac{1}{\alpha} \right)^\frac{1}{\alpha}
\end{equation}
and
\begin{equation}\label{eq:d=2s=0meq}
  \dd\meq(r \xh) = \alpha^2
  r^{\alpha-1}\; \ind_{0\leq r\leq R}\;
  \dd r\; \sigma_1(\dd\xh) .
\end{equation}
Note that when $\alpha = 2$, $\meq$ is uniform on the ball in $\Rd$ with
volume $c_d/(2d)$, which has radius $R$.

\subsubsection{Case $s = d - 3$}

It is proved in \cite{potspe} that when $d=s+3$ and
$V=\gamma\left|\cdot\right|^2$, $\gamma>0$, then $\meq$ has a radial arcsine
distribution supported on a ball. It is also mentioned in \cite{potspe} that
an explicit computation of $\meq$ is still possible when $V$ belongs to a
special class of radial polynomials or hypergeometric functions. On the other
hand, when $d=3$, $s=0$, $V=\gamma\left|\cdot\right|^\alpha$, $\gamma>0$, it
is easily proved that for no value of $\alpha>0$ is the support of $\meq$ a
single centered sphere; indeed, it is easy to show that this would violate the
Frostman conditions. It is tempting to conjecture that for $d=s+3$, $s\geq0$,
and for any $\alpha>0$, the support of $\meq$ has full dimension. Numerical
experiments suggest that when $d=s+3$ then the support of $\meq$ could be a
ball when $\alpha \leq 2$ and a shell (region between two concentric spheres)
when $\alpha > 2$.

\subsubsection{Case $s=d-2n$, $n\in\{1,2,3,\ldots\}$, iterated
  Coulomb}\label{ss:s=d-2n}

We have the following proposition:

\begin{proposition}[Iterated Coulomb]\label{pr:s=d-2n}
  Suppose that $V$ is locally integrable and is such that $\meq$ exists and is characterized by the Frostman conditions \eqref{eq:eulerlagrange}.
    Then, for $s=d-2n$, the equilibrium measure $\meq$ is equal, on the interior of
  its support, in the sense of distributions, to the distribution
  \begin{equation}\label{eq:s=d-2n}
    \frac{\Delta^{n}V}{c_dC_{d,n}}
  \end{equation}
  where $c_d$ is defined in \eqref{eq:cd}, 
  \begin{equation}
    C_{d,n}:=(-1)^{n-1}\frac{(d-4)!!(2n-2)!!}{(d-2n-2)!!},
  \end{equation}
  and $z!!:=\prod_{k=0}^{\lceil\frac{z}{2}\rceil-1}(z-2k)$ is the \emph{double
	factorial} (with $z!!:=1$ if $z\leq0$).\\ In
  particular, if $V$ is $\mathcal{C}^{2n}$ on an open set $O\subset\mathbb{R}^d$, then
  \begin{itemize}
  \item 
    $O\cap\mathrm{int}(\mathrm{supp}(\meq))\cap\{\Delta^{n}V<0\}=\varnothing$ when $n$ is odd (since then $C_{d,n}>0$);
  \item $O\cap\mathrm{int}(\mathrm{supp}(\meq))\cap\{\Delta^{n}V>0\}=\varnothing$ when $n$ is even (since then $C_{d,n}<0$).
  \end{itemize}
\end{proposition}

Before we give a proof of Proposition \ref{pr:s=d-2n}, we make the
following observations:
\begin{itemize}
\item $2n=d-s\leq d+1$ since $s>-2$
\item Proposition \ref{pr:s=d-2n} with $n=1$ is the Coulomb case $s=d-2$ of
  Section \ref{ss:s=d-2}
\item $\Delta\ABS{\cdot}^\alpha=\alpha(\alpha+d-2)\ABS{\cdot}^{\alpha-2}$
  which has the same sign as $d+\alpha-2$ (critical value is $\alpha=2-d$)
\item Beware that $\meq$ is not necessarily absolutely continuous and can have
  a singular part supported outside the interior of its support, as shown in
  Theorem \ref{th:main}. Indeed, let us consider the case $d=s+4=s+2n\geq4$,
  $n=2$, and $V=\gamma\ABS{\cdot}^\alpha$, $\gamma>0$. If $0<\alpha<2,$ then
  Theorem \ref{th:main} states that $\meq$ is equal, in the sense of
  distribution, on the interior of its support, to
  \[
    \beta f
    =
    \frac{2-\alpha}{d-2}
    \frac{\alpha+d-4}{|\mathbb{S}^{d-1}|}\ABS{\cdot}^{\alpha-4}
    \gamma\frac{\alpha(\alpha+d-2)}{2}
    \begin{cases}
      \frac{1}{d-4} & \text{if $d\neq4$}\\
      1 & \text{if $d=4$}
    \end{cases} .
  \]
  Alternatively, for $d \geq 3$, $n = 2$ (noting that
  $C_{4,2} = -2, C_{3,2} = 2$)
  \[
    \frac{\Delta^nV}{c_dC_{d,n}}
    =\gamma\frac{\Delta^2\ABS{\cdot}^\alpha}{c_dC_{d,2}}
    =-\gamma\frac{\alpha(\alpha+d-2)(\alpha-2)(\alpha+d-4)}
                 {2(d-2)|\mathbb{S}^{d-1}|}
      \ABS{\cdot}^{\alpha-4}
    \begin{cases}
      \frac{1}{d-4} & \text{if $d \neq 4$}\\
      1 & \text{if $d = 4$}
    \end{cases}
  \]
  which matches the formula for $\beta f$ (including the case $d = 3$,
  $s = -1$ for $1 < \alpha < 2$)!

  In contrast, if $\alpha\geq2,$ then
  \[
    \Delta^2V=\gamma\alpha(\alpha+d-2)(\alpha-2)(\alpha+d-4)\ABS{\cdot}^{\alpha-4}>0,
  \]
  and since $C_{d,2}<0$ we get $\{\Delta^2V\geq0\}=\varnothing$ therefore
  $\mathrm{int}(\mathrm{supp}(\meq))=\varnothing$, which implies that $\meq$
  is singular. Indeed Theorem \ref{th:main} gives in that case
  $\meq=\sigma_R$.

\end{itemize}

\begin{proof}[Proof of Proposition \ref{pr:s=d-2n}] For $u>-2$ with
  $u\neq d-2$, in the sense of distributions,
  \begin{equation}\label{eq:DeltaK}
    \Delta K_u=-c_{d,u}K_{u+2},
    \quad\text{where}\quad
    c_{d,u}=
    \begin{cases}
      |u|(d-2-u) & \text{if $u\neq0$}\\
      d-2& \text{if $u=0$}
    \end{cases}.
  \end{equation}
  The idea is to apply $\Delta$ repeatedly, $n-1$ times, to pass, via
  \eqref{eq:DeltaK}, by $+2$ steps, from $K_s$ to $K_{d-2}$, and then to use
  \eqref{eq:Delta}. We know that $\meq$ exists and is unique and satisfies the
  Euler\,--\,Lagrange equations \eqref{eq:eulerlagrange}. Applying $\Delta^n$
  to both sides of \eqref{eq:eulerlagrange} gives
  (see \cite[Lemma A.1 (iii)]{potspe})
  \begin{equation}\label{eq:dmeulerlagrange}
    \Delta^n K_s * \meq = -\Delta^n V
    \quad\text{on $\mathrm{supp}(\meq)$.}
  \end{equation}
  If $n>1$, iterating \eqref{eq:DeltaK} and using \eqref{eq:Delta}
  we get, with $C_{d,n}=(-1)^{n-1}\prod_{k=0}^{n-2}c_{d,s+2k}$,
  \begin{equation}\label{eq:deltam}
    \Delta^n K_s
    =\Delta(\Delta^{n-1} K_s)
    =C_{d,n}\Delta K_{s+2(n-1)}
    =C_{d,n}\Delta K_{d-2}
    =-C_{d,n}c_d\delta_0.
  \end{equation}
  Recall that $s=d-2n$. Moreover $C_{d,n}>0$ if $n$ is odd while $C_{d,n}<0$
  if $n$ is even. The formula \eqref{eq:deltam} remains valid for $n=1$
  (namely the Coulomb case $d=s+2$) by taking $C_{d,1}=1$, and reduces then to
  \eqref{eq:Delta}. By combining \eqref{eq:dmeulerlagrange} and
  \eqref{eq:deltam}, we get that $\meq$ is equal to
  $(C_{d,n}c_d)^{-1}\Delta^nV$ on the interior of $\mathrm{supp}(\meq)$. Let
  us compute the value of $C_{d,n}$ for $n \geq 2$. If $s=d-2n >-2,$ then
  $d>2$ and
  \begin{align*}
    (-1)^{n-1}C_{d,n}
    & =  \prod_{k=0}^{n-2}c_{d,s+2k} = \prod_{k=0}^{n-2}(s+2k)(d-2-s-2k) \\
    & = \frac{(s+2n-4)!!}{(s-2)!!} \  \frac{(d-s-2)!!}{(d-s-2n)!!} \\
    & = \frac{(d-4)!!}{(d-2n-2)!!} \  (2n-2)!!. 
  \end{align*}
  Note that this formula also gives $C_{d,1}=1$ for the Coulomb case $n=1$, as
  desired.
\end{proof}

\subsection{Different norms}

Numerical experiments using different norms, for example
$V=\left|\cdot\right|_p^p$, where
$\left|x\right|_p^p:=|x_1|^p+\cdots+|x_d|^p$, give intriguing results, in
accordance with \eqref{eq:s=d-2n}. See for instance Figure~\ref{fi:p}
for $p=4$. In this case the support of $\meq$ has the symmetries of the
$\ell^p$ norm but is still a mystery and we do not know if $\meq$ is
absolutely continuous, singular, or a mixture of both. Note that the uniform
distribution on $\ell^p$ balls and spheres in $\mathbb{R}^d$ admits remarkable
representations and characterizations, see for instance
\cite{MR2123199,MR2498715}. Another possibility is to modify the kernel,
namely to take $\ABS{\cdot}_p^{-s}$, and the first question is then the
positive definiteness in order to get convexity and uniqueness of $\meq$.

In $\mathbb{R}^d$ with $d=2n$, when $V=\ABS{\cdot}_{2n}^{2n}$, the support of $\meq$ has the symmetries of $\ABS{\cdot}_{2n}$ (spherical when $n=1$). Furthermore, since $\Delta^{n}V=(2n)! d$ is a non-zero constant, we get from \eqref{eq:s=d-2n} that if $n$ is odd, then $\meq$ is uniform (constant density) on the interior of its support, while if $n$ is even then the interior of the support of $\meq$ is empty and $\meq$ is singular. If $n=1$, then $\Delta^{n}V$ is constant (Coulomb case) and $\meq$ is the uniform law on a ball.

\section{Proof of Theorem \ref{th:main}}
\label{se:th:main:proof}

We first consider the equilibrium problem restricted to spheres (Section
\ref{se:spheres}). This spherical case is essential for the proof of the case
$\alpha\geq2$ which is given in Section \ref{se:alpha>=2}. We then provide the
proof of the case $d\geq4$ and $1<\alpha<2$ (Section \ref{se:alpha02}), and
then the proof of the case $d=3$ (Section \ref{se:d=3etc}). As in the
statement of Theorem \ref{th:main}, we have to consider separately the cases
$d=3$ ($s=-1$), $d=4$ ($s=0$), and $d>4$ ($s>0$), which is done
respectively in Lemmas \ref{le:phi':d=3s=-1a>=2}, \ref{le:phiop}, and
\ref{le:phi':d>=5s=d-4a>=2}.

\subsection{Optimal spheres}\label{se:spheres}

We first consider the equilibrium problem restricted to spheres and provide
some simple lemmas that are needed in the proof of part~\ref{it:sphere} of
assertion ($i$) of Theorem~\ref{th:main}.

Throughout this section, $V=\gamma\ABS{\cdot}^\alpha$, $\gamma>0$, $\alpha>0$,
and we use of the fact that for $R > 0$, $K_s*\sigma_R + V$ is
radially symmetric, and so for $x\in\Rd$, we introduce the parametrization $x=\lambda R \xh$ with
$\lambda\geq0$ and $|\xh|=1$, and we only need to consider
\begin{equation}\label{eq:sphere:phi}
  \varphi(\lambda)
  =\varphi_{\sigma_R}(\lambda)
  :=(K_s*\sigma_R+V)(\lambda R \xh),
  \quad \lambda \geq 0, \quad \xh\in\mathbb{S}^{d-1}.
\end{equation}
Note that with this parametrization, the critical value for $\lambda$ is $\lambda=1$, regardless of the value of $R$.

\begin{lemma}
  \label{le:spheres}
  Let $d\geq2$, $s>-2$ and $\varphi$ be as in \eqref{eq:sphere:phi}. Then, with the above notation,
  for $x\in\mathbb{R}^d$, and with $\tau_{d-1}$ as in Lemma
  \ref{le:funkhecke}, we have
  \[
    \varphi(\lambda)
    =
    \begin{cases}
      \ds \frac{\tau_{d-1}}{R^s}
      \int_{-1}^1
      \frac{(1-t^2)^{\frac{d-3}{2}}}{(\lambda^2+1-2\lambda t)^{\frac{s}{2}}}\dd t
      +\gamma(\lambda R)^{\alpha}
      & \text{if $s\neq0$}\\[1em]
      \ds -\log R
      -\frac{\tau_{d-1}}{2}
      \int_{-1}^1\log(\lambda^2+1-2\lambda t)(1-t^2)^{\frac{d-3}{2}}\dd t
      +\gamma(\lambda R)^\alpha
      & \text{if $s=0$}
    \end{cases}.
  \]
  Moreover $\varphi$ is continuous on $[0,\infty)$ and differentiable on
  $(0,+\infty)$, and for $\lambda>0$,
  \[
    \varphi'(\lambda)
    =
    \begin{cases}
      \ds\frac{s\tau_{d-1}}{R^s}\int_{-1}^1
      \frac{(t-\lambda)(1-t^2)^{\frac{d-3}{2}}}{(\lambda^2+1-2\lambda t)^{\frac{s+2}{2}}}\dd t
      +\gamma\alpha R^\alpha\lambda^{\alpha-1}
      & \text{if $s\neq0$}\\[1em]
      \ds\tau_{d-1}
      \int_{-1}^1\frac{(t-\lambda)(1-t^2)^{\frac{d-3}{2}}}{\lambda^2+1-2\lambda t}\dd t
      +\gamma\alpha R^\alpha\lambda^{\alpha-1}
      & \text{if $s=0$}
    \end{cases}.
  \]
\end{lemma}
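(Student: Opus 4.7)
The proof is essentially a direct application of the Funk\,--\,Hecke formula of Lemma~\ref{le:funkhecke}, followed by a differentiation under the integral sign. I would treat the two cases $s\neq 0$ and $s=0$ in parallel, the only difference being logarithmic versus power behaviour of the kernel.

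\textbf{Step 1 (closed form).} Using the scaling $y=R\yh$ with $\yh\in\sph$, I would rewrite
$(K_s*\sigma_R)(\lambda R\xh)=\int_{\sph}K_s(R(\lambda\xh-\yh))\sigma_1(\dd\yh)$
and pull out the $R$-dependence via $K_s(Rz)=\mathrm{sign}(s)R^{-s}|z|^{-s}$ for $s\neq0$ and $K_0(Rz)=-\log R-\log|z|$ for $s=0$. The elementary identity $|\lambda\xh-\yh|^2=\lambda^2+1-2\lambda(\xh\cdot\yh)$ makes the integrand zonal in $\yh$, so Lemma~\ref{le:funkhecke} collapses the spherical integral to the announced one-variable integral on $[-1,1]$ with weight $(1-t^2)^{(d-3)/2}$ and prefactor $\tau_{d-1}$. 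Adding $V(\lambda R\xh)=\gamma(\lambda R)^\alpha$ yields the two displayed formulas for $\varphi(\lambda)$.

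\textbf{Step 2 (continuity and differentiability).} Continuity on $[0,\infty)$ follows from continuity of the integrand in $\lambda$ together with dominated convergence; the only delicate value is $\lambda=1$, where $\lambda^2+1-2\lambda t$ vanishes at $t=1$, and integrability against $(1-t^2)^{(d-3)/2}$ there is exactly the condition $s<d-1$ for $\sigma_R$ to have finite Riesz $s$-energy. For differentiability on $(0,\infty)$, the identity
$\partial_\lambda(\lambda^2+1-2\lambda t)^{-s/2}=s(t-\lambda)(\lambda^2+1-2\lambda t)^{-(s+2)/2}$
(with the analogue $\partial_\lambda\log(\lambda^2+1-2\lambda t)=-2(t-\lambda)/(\lambda^2+1-2\lambda t)$ when $s=0$), combined with $\partial_\lambda\gamma(\lambda R)^\alpha=\gamma\alpha R^\alpha\lambda^{\alpha-1}$, reproduces the claimed expression for $\varphi'$ once $\partial_\lambda$ is exchanged with the integral.

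\textbf{Main obstacle.} The only point that really requires care is the justification of passing $\partial_\lambda$ under the integral near $\lambda=1$: the differentiated kernel has the more singular exponent $(s+2)/2$ rather than $s/2$, but the factor $(t-\lambda)$ supplies one extra power of $(1-t)$ of cancellation in the offending region $t\to1$. Consequently, on each compact subset of $(0,\infty)$, the integrand is dominated uniformly by a fixed multiple of $(1-t)^{(d-3-s)/2}(1+t)^{(d-3)/2}$, which lies in $L^1([-1,1])$ under the same bound $s<d-1$ already needed for finiteness of $\varphi(1)$. A standard dominated convergence argument then completes the proof.
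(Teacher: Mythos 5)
Your Step~1 is exactly the paper's proof: the paper, too, simply inserts $y=R\yh$, uses homogeneity of $K_s$, the identity $|\lambda\xh-\yh|^2=\lambda^2+1-2\lambda\,\xh\cdot\yh$, and the Funk--Hecke formula of Lemma~\ref{le:funkhecke}, then adds $V$. (Note in passing that you correctly carry the factor $\mathrm{sign}(s)$ from $K_s(Rz)=\mathrm{sign}(s)R^{-s}|z|^{-s}$; the stated display in the lemma omits this factor, and it indeed reappears in the paper's later use at $s=-1$, so you have implicitly caught a sign typo in the statement.)

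Your Step~2 goes beyond the paper, which does not justify differentiation under the integral at all; that is a legitimate thing to want to fill in. However, the dominating function you propose is not correct. Writing $\lambda^2+1-2\lambda t=(\lambda-t)^2+(1-t^2)$ and setting $p=|\lambda-t|$, $q=1-t$, the differentiated integrand near $t=1$ behaves like
$p\,q^{(d-3)/2}\bigl(p^2+2q\bigr)^{-(s+2)/2}$.
The extra factor $p$ supplies a full power of $q$ of cancellation \emph{only when} $\lambda=1$; as $\lambda$ ranges over a neighbourhood of $1$, the worst case is $p\asymp\sqrt{q}$ (not $p\asymp q$), and the supremum over such $\lambda$ is of order $q^{(d-4-s)/2}$, one half-power more singular than the $q^{(d-3-s)/2}$ you wrote down. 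Consequently a uniform integrable majorant exists only for $s<d-2$, not up to $s<d-1$ as you claim; for $d-2\le s<d-1$ the crude dominated-convergence argument breaks down and a finer estimate on the difference quotient would be required. For the range actually used in this paper, $s=d-4<d-2$, the (corrected) bound $q^{(d-4-s)/2}=q^{0}$ is trivially integrable, so the conclusion of the lemma is not in doubt; but your stated majorant and the asserted threshold $s<d-1$ are off by a half power.
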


\begin{proof}
  The result follows from the identity
  \[
    \int_{\sph_R} K_s(x-y) \sigma_R(\dd y)
    =
    \begin{cases}
      \ds R^{-s}\int_{\sph}(\lambda^2-2\lambda \xh\cdot \yh+1)^{-\frac{s}{2}}\sigma_1(\dd \yh)
      &\text{if $s\neq0$}\\[1em]
      \ds-\log R-\frac{1}{2}\int_{\sph} \log(\lambda^2-2\lambda \xh\cdot \yh+1) \sigma_1(\dd \yh)
      &\text{if $s=0$}
    \end{cases} ,
  \]
  which holds for arbitrary $\xh\in\sph$, and the Funk\,--\,Hecke formula of
  Lemma \ref{le:funkhecke}.
\end{proof}

\begin{lemma}
  \label{le:spheres:opt}
  Assume that
  \[
    d\geq3,\quad
    -2 < s \leq d-4,\quad
    V = \gamma \ABS{\cdot}^\alpha,\ \alpha\geq2.
  \]
  The energy function $\eta(R):= \mathrm{I}(\sigma_R)$, from
  \eqref{eq:Esalpha}, achieves its infimum on $(0,\infty)$ at the unique point
  \[
    R:=
    \begin{cases}
      \PAR{\frac{sW_{s,d-1}}{2\gamma\alpha}}^{\frac{1}{s+\alpha}}
      &\text{if $s\neq0$}\\
      \PAR{\frac{1}{2\gamma\alpha}}^{\frac{1}{\alpha}}
      &\text{if $s=0$}
    \end{cases},
  \]
  where $W_{s,d-1}$ is the Wiener constant for $\sph$ given by
  \[
    W_{s,d-1}
    :=\iint K_s(x-y) \sigma_1(\dd x) \sigma_1(\dd y)
    =
    \begin{cases}
      \ds\mathrm{sign}(s)\frac{\Gamma(\frac{d}{2})\Gamma(d-1-s)}{\Gamma(\frac{d-s}{2})\Gamma(d-1-\frac{s}{2})}&
      \text{if $-2<s<0$, or}\\[-1ex]
      & \text{if $0<s<d-1$ and $d\geq 4$}\\[1em]
      \ds-\log(2)+\frac{\psi(d-1)-\psi(\frac{d-1}{2})}{2}&\text{if $s=0$ and
        $d\geq 4$}
    \end{cases},
  \]
  and $\psi(z):=\Gamma'(z)/\Gamma(z)$ is the digamma special function, see for
  example \cite[Prop.\ 4.6.4, p.\ 180]{MR3970999}.
\end{lemma}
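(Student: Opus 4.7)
The plan is to reduce $\eta(R) = \mathrm{I}(\sigma_R)$ to an explicit one-variable expression via the scaling of the Riesz and logarithmic kernels, then locate and certify the unique minimum by elementary calculus.

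First I would unfold the definition \eqref{eq:Esalpha}, splitting the energy as
\[
  \eta(R) = \iint K_s(x-y)\,\sigma_R(\dd x)\sigma_R(\dd y) + 2\int V\dd\sigma_R.
\]
Since $|x| = R$ on $\sph_R$, the second term is $2\gamma R^\alpha$. For the self-interaction, the map $x \mapsto Rx$ pushes $\sigma_1$ forward to $\sigma_R$, and the homogeneities $K_s(Ru) = R^{-s}K_s(u)$ for $s\neq 0$ and $K_0(Ru) = -\log R + K_0(u)$ for $s=0$ yield
\[
  \eta(R) =
  \begin{cases}
    W_{s,d}\,R^{-s} + 2\gamma R^\alpha & \text{if } s \neq 0, \\
    -\log R + W_{0,d} + 2\gamma R^\alpha & \text{if } s = 0.
  \end{cases}
\]

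Next I would differentiate. For $s \neq 0$,
\[
  \eta'(R) = R^{-s-1}\bigl(-sW_{s,d} + 2\gamma\alpha R^{\alpha+s}\bigr).
\]
The bracketed factor is strictly increasing on $(0,\infty)$ because $\alpha+s > 0$ (which uses $\alpha\geq 2$ and $s > -2$). A quick inspection of the sign conventions in the definition \eqref{eq:VW} of $K_s$ and in the closed form for $W_{s,d}$ stated in the lemma shows that $s$ and $W_{s,d}$ always share the same sign on the relevant range $-2 < s \leq d-4$, so $sW_{s,d} > 0$. Hence the bracket has a unique zero at $R_* = (sW_{s,d}/(2\gamma\alpha))^{1/(\alpha+s)}$, with $\eta' < 0$ on $(0,R_*)$ and $\eta' > 0$ on $(R_*,\infty)$. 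Therefore $\eta$ strictly decreases and then strictly increases, and $R_*$ is the unique minimizer on $(0,\infty)$. The logarithmic case is entirely parallel: $\eta'(R) = R^{-1}(-1 + 2\gamma\alpha R^\alpha)$ has the single zero $R = (1/(2\gamma\alpha))^{1/\alpha}$, which is the unique minimum by the same monotonicity argument.

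The computation is essentially routine; the only point meriting care is checking that $sW_{s,d} > 0$ throughout the allowed range so that the formula returns a positive radius, which is straightforward from the sign conventions recorded in the statement.
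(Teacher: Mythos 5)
Your proposal is correct and follows essentially the same route as the paper: reduce $\eta(R)$ to a one-variable expression via the homogeneity of $K_s$ (respectively the additive shift of $K_0$), then locate the unique critical point and certify it by elementary calculus using that $s$ and $W_{s,d}$ share the same sign. The paper verifies local minimality via $\eta''(R_*)>0$ (and strict convexity when $s=0$) rather than the sign-change argument for $\eta'$, but this is an inessential variation.
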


Note that $-2<s\leq d-4$ implies $d>2$; hence $d\geq3$.

We also remark that when $s=0$, the radius $R$ does not depend on the dimension $d$.

\begin{proof}
  We have
  \begin{align*}
    \eta(R)
    : =\mathrm{I}(\mu_R)
    &= \iint_{\sph_R\times\sph_R} K_s(x-y)\sigma_R(\dd x)\sigma_R(\dd y)
      + 2\int_{\sph_R} V(x)\sigma_R(\dd x)\\
    &=\iint_{\sph\times\sph} K_s(R\xh-R\yh)\sigma_1(\dd \xh)\sigma_1(\dd \yh) + 2 \gamma R^\alpha.
  \end{align*}

  \emph{Case $s=0$.} In this case, we have
  $K_s(x - y) = K_0(R(\xh-\yh)) = -\log(R)-\log|\xh-\yh|$,
  and thus
  \[
    \eta(R) = -\log(R) - \iint_{\sph\times\sph}
     \log|\xh-\yh|\sigma_1(\dd \xh) \sigma_1(\dd \yh) + 2\gamma R^\alpha.
  \]
  Now $\eta$ is strictly convex and reaches its minimum at the unique optimal
  point
  \begin{equation}\label{eq:R0}
    R_*=\PAR{\frac{1}{2 \gamma \alpha}}^{1/\alpha}.
  \end{equation}

  \emph{Case $-2<s<d-4$, $s\neq0$.} 
  \[
    \eta(R) =
    \frac{W_{s,d-1}}{R^s}
     + 2 \gamma R^\alpha
     \quad\mbox{where}\quad
     W_{s,d-1} = \iint_{\sph\times\sph}K_s(\xh-\yh)
     \sigma_1(\dd \xh)\sigma_1(\dd \yh) .
  \]
  The equation $\eta'(R)=0$ has a unique solution (critical point) given by
  \begin{equation}\label{eq:Rs}
    R_*=\PAR{\frac{sW_{s,d-1}}{2 \gamma \alpha}}^{1/(s+\alpha)}.
  \end{equation}
  We have
  \begin{align*}
    \eta''(R)
    &= s(s+1) R^{-(s+2)}W_{s,d-1} + 2\gamma \alpha(\alpha-1)R^{\alpha-2}\\
    &=R^{-(s+2)}\PAR{s(s+1) W_{s,d-1} + 2\gamma\alpha(\alpha-1)R^{\alpha+s}},
  \end{align*}
  and thus
  \[
    \eta''(R_*)
    =\PAR{\frac{2 \gamma \alpha}{sW_{s,d-1}}}^{\frac{s+2}{\alpha+s}}
    s(s + \alpha) W_{s,d-1}. 
  \]
  It follows that $\eta''(R_*)>0$ since $s+\alpha>0$ and since $s$ and
  $W_{s,d-1}$ have the same sign.
\end{proof}

\subsection{Case $\alpha\geq2$}\label{se:alpha>=2}

Let $V = \gamma\left|\cdot\right|^\alpha$ with $\gamma>0$ and $\alpha\geq2$.
We have to show that $\meq$ is uniform on a sphere. For this purpose, we
verify the Frostman conditions \eqref{eq:eulerlagrange} which asserts that the
support of $\meq$ is a sphere of radius $R$ if and only if, for some constant
$c$, we have $\varphi(\lambda)=c$ when $\lambda=1$ and
$\varphi(\lambda)> c$ when $\lambda\neq1$. Since $\varphi$ is continuous on
$[0,+\infty)$ and differentiable on $(0,+\infty)$, part \ref{it:thma}-\ref{it:sphere} of Theorem~\ref{th:main} follows from
Lemma~\ref{le:phi':d>=4s=0a>=2} in the case where $s=0$ and $d\geq4$, part
\ref{it:thmb}-\ref{it:sph3} from Lemma~\ref{le:phi':d=3s=-1a>=2} in the case
where $s=-1$ and $d=3$, and Lemma~\ref{le:phi':d>=5s=d-4a>=2} in the case
$d\geq 5$ and $s=d-4$.

\begin{lemma}\label{le:phi':d>=4s=0a>=2}
  Let $\varphi$ be as in \eqref{eq:sphere:phi} with $s=0$, $d\geq4$,
  $\alpha\geq2$, and $R=(\frac{1}{2\gamma\alpha})^{\frac{1}{\alpha}}$. Then
  \[
    \varphi'(\lambda)
    =\frac{1}{2\lambda}\SBRA{\frac{1-\lambda^2}{1+\lambda^2}%
      \ \hg\PAR{\frac{1}{2},1;\frac{d}{2};\frac{4\lambda^2}{(1+\lambda^2)^2}}-1}+\frac{\lambda^{\alpha-1}}{2},\quad\lambda>0.
  \]
  Moreover $\varphi'(\lambda)<0$ if $0<\lambda<1$, $\varphi'(1)=0$, while $\varphi'(\lambda)>0$ if $\lambda>1$.
\end{lemma}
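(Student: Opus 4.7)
The plan is twofold: first derive the explicit closed form for $\varphi'$ from the integral representation of Lemma~\ref{le:spheres} (case $s=0$), and then reduce the sign analysis on $(0,1)$ and $(1,\infty)$ to the explicit $d=4$ case.

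For the formula, I would start from the algebraic identity
\[
\frac{t-\lambda}{\lambda^2+1-2\lambda t}=\frac{1-\lambda^2}{2\lambda(\lambda^2+1-2\lambda t)}-\frac{1}{2\lambda},
\]
which follows from $2\lambda(t-\lambda)=(1-\lambda^2)-(\lambda^2+1-2\lambda t)$. This splits the integral defining $\varphi'$ into two pieces: the second piece evaluates to $-1/(2\lambda)$ via the normalization $\tau_{d-1}\int_{-1}^{1}(1-t^2)^{(d-3)/2}\dd t=1$; the first piece is handled by factoring $1+\lambda^2$ out of the denominator, setting $u:=2\lambda/(1+\lambda^2)$, expanding $(1-ut)^{-1}=\sum_{n\geq 0}u^{n}t^{n}$, and integrating termwise. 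Odd moments vanish, and the even ones produce Beta functions that, after Gamma manipulations using $(1/2)_k=\Gamma(k+1/2)/\sqrt{\pi}$ and $(d/2)_k=\Gamma(k+d/2)/\Gamma(d/2)$, assemble into the series $\hg(1/2,1;d/2;u^{2})/(1+\lambda^{2})$ with $u^{2}=4\lambda^{2}/(1+\lambda^{2})^{2}$. The choice of $R$ makes $\gamma\alpha R^{\alpha}=1/2$, producing the external-field contribution $\lambda^{\alpha-1}/2$; collecting everything yields the claimed expression. At $\lambda=1$ the prefactor $(1-\lambda^{2})$ vanishes while the hypergeometric stays finite (Gauss's formula, since $d\geq 4>3$), so $\varphi'(1)=-1/2+1/2=0$.

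The sign analysis is where I expect the main obstacle. I plan to work with $H(\lambda):=2\lambda\varphi'(\lambda)$, whose sign matches that of $\varphi'$, and exploit the fact that the coefficients $(1/2)_n/(d/2)_n$ in the series for $\hg(1/2,1;d/2;z)$ are non-increasing in $d$; since $z\in[0,1]$, this gives the pointwise upper bound $\hg(1/2,1;d/2;z)\leq\hg(1/2,1;2;z)$ for every $d\geq 4$. The $d=4$ case admits the Catalan-number closed form $\hg(1/2,1;2;z)=2(1-\sqrt{1-z})/z$, and substituting $\sqrt{1-z}=|1-\lambda^{2}|/(1+\lambda^{2})$ for $z=4\lambda^{2}/(1+\lambda^{2})^{2}$ reduces the hypergeometric to $1+\lambda^{2}$ when $\lambda\leq 1$ and to $(1+\lambda^{2})/\lambda^{2}$ when $\lambda\geq 1$.

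On $(0,1)$ the factor $(1-\lambda^{2})/(1+\lambda^{2})$ is positive, so the monotonicity bound gives
\[
H(\lambda)\leq(1-\lambda^{2})-1+\lambda^{\alpha}=\lambda^{2}\PAR{\lambda^{\alpha-2}-1}\leq 0
\]
using $\alpha\geq 2$. On $(1,\infty)$ the same factor is negative, flipping the inequality:
\[
H(\lambda)\geq\frac{1-\lambda^{2}}{\lambda^{2}}-1+\lambda^{\alpha}=\lambda^{\alpha}+\lambda^{-2}-2\geq\lambda^{2}+\lambda^{-2}-2=\PAR{\lambda-\lambda^{-1}}^{2}>0.
\]
Strict positivity holds throughout $(1,\infty)$; the negativity on $(0,1)$ is strict whenever $d>4$ or $\alpha>2$, and fails only in the borderline case $d=4,\alpha=2$, where $H\equiv 0$ on $[0,1]$. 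I would flag this degenerate case in the proof and note that it is harmless for the Frostman condition $\varphi\geq c$, which requires only a weak inequality off the support.
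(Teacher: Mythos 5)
Your argument is correct and follows the same overall strategy as the paper: express $\varphi'$ via Funk--Hecke, identify the univariate integral as $\hg(\tfrac12,1;\tfrac d2;\cdot)$, reduce the sign analysis to $d=4$ by monotonicity of the hypergeometric in $d$ (positive, non-increasing coefficients $(1/2)_k/(d/2)_k$), and close with the algebraic form $\hg(\tfrac12,1;2;z)=2(1-\sqrt{1-z})/z$. Where you diverge is only in the intermediate algebra for the formula: you split
\[
\frac{t-\lambda}{\lambda^2+1-2\lambda t}=\frac{1-\lambda^2}{2\lambda(\lambda^2+1-2\lambda t)}-\frac{1}{2\lambda}
\]
and then expand a single geometric series, whereas the paper substitutes $u=t^2$, invokes the Euler integral representation twice (parameters $a=\tfrac32$ and $a=\tfrac12$), and merges the two ${}_2F_1$'s with the contiguous relation $1+\tfrac ab\zeta\,\hg(a+1,1;b+1;\zeta)=\hg(a,1;b;\zeta)$. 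Your route is more direct and easier to check; both land on the same closed form. One small technical point worth making explicit in your write-up: the termwise integration of $\sum_n u^n t^n$ with $u=2\lambda/(1+\lambda^2)\in[0,1]$ reaches the endpoint $u=1$ exactly at $\lambda=1$, where the series no longer converges uniformly; the integral itself is still finite because $d\geq4$, so you should invoke Abel's theorem or dominated convergence rather than plain termwise integration at that one point.

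Your flag on the degenerate case is a genuine and correct observation. For $d=4,\alpha=2$ one has $\varphi'\equiv 0$ on $(0,1)$, so the lemma's assertion ``$\varphi'(\lambda)<0$ if $0<\lambda<1$'' is not strict in that corner of the parameter range; the paper's own proof (which proposes to ``first'' verify the sign at $\alpha=2$ and then extend by monotonicity in $\alpha$) inherits the same imprecision, since at $\alpha=2$, $d=4$ the function is identically zero rather than strictly negative there. As you say, this is harmless: the Frostman condition off the support only needs $\varphi\geq c$, so $\varphi$ constant on $[0,1]$ and strictly increasing on $(1,\infty)$ still gives $\meq=\sigma_R$. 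Your $H(\lambda)$ reformulation with the two explicit chains of inequalities is, if anything, cleaner than the paper's ``examination of the formula reveals'' phrasing and makes the monotonicity-in-$d$ step fully rigorous.
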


\begin{proof}
  In view of the formula given by Lemma \ref{le:spheres}, we have, with $\zeta:=4\lambda^2/(1+\lambda^2)^2$,
  \begin{align*}
    J
    &:=\int_{-1}^1\frac{(t-\lambda)(1-t^2)^{\frac{d-3}{2}}}{\lambda^2+1-2\lambda
      t}\dd t\\
    &=
      \frac{\zeta}{4\lambda}\int_{0}^1\frac{2t^2-(1+\lambda^2)}{1-\zeta t^2}(1-t^2)^{\frac{d-3}{2}}\dd
      t\\
    &=
      \frac{\zeta}{4\lambda}\int_{0}^1\frac{2u^{1/2}-u^{-1/2}(1+\lambda^2)}{1-\zeta u}(1-u)^{\frac{d-3}{2}}\dd
      u.
  \end{align*}
  The Euler integral representation of Lemma \ref{le:hypeuler} gives in
  particular, for $a>-1$, $b>-1$, $|\zeta|<1$,
  \[
    \int_0^1\frac{u^{a-1}(1-u)^{b-1}}{1-\zeta u}\dd u
    =\frac{\Gamma(a)\Gamma(b)}{\Gamma(a+b)}\ \hg(a,1;a+b;\zeta).
  \]
  Using this formula with $a\in\{3/2,1/2\}$, $b=(d-1)/2$, $c=1$, and
  $\Gamma(\zeta+1)=\zeta\Gamma(\zeta)$, we get
  \[
    \tau_{d-1}J =\frac{\zeta}{4\lambda} \SBRA{
      \frac{2}{d}\ \hg\PAR{\frac{3}{2},1;\frac{d+2}{2};\zeta} -(1+\lambda^2)\
      \hg\PAR{\frac{1}{2},1;\frac{d}{2};\zeta}}.
  \]
  At this step, we observe that the identity $\zeta(\zeta+1)_k=(\zeta)_{k+1}$ gives the
  formula
  \[
    1+\frac{a}{b}\zeta\ \hg(a+1,1;b+1;\zeta) = \hg(a,1;b;\zeta).
  \]
  Finally, using this formula, we obtain, denoting
  $G:= \hg\PAR{\frac{1}{2},1;\frac{d}{2},\zeta} := \sum_{k=0}^\infty\frac{(\frac{1}{2})_k}{(\frac{d}{2})_k}\zeta^k$,
  \[
    \tau_{d-1}J
    =\frac{\zeta}{4\lambda}
    \SBRA{\frac{2}{\zeta}(G-1)-(1+\lambda^2)G}
    =\frac{1}{2\lambda}
    \SBRA{\frac{1-\lambda^2}{1+\lambda^2}G-1}.
  \]
  Hence the formula for $\varphi'(\lambda)$. Alternatively, we could also use the formula for
  $\varphi$ of Lemma \ref{le:spheres} to get
  \begin{equation}
    \varphi(\lambda)
    =\frac{\log(2\gamma\alpha)}{\alpha}%
    +\frac{\lambda^2}{d(1+\lambda^2)^2}%
    \
    \hgp{3}{2}\PAR{1,1,\frac{3}{2};2,\frac{d+2}{2};\frac{4\lambda^2}{(1+\lambda^2)^2}}%
    -\frac{\log(1+\lambda^2)}{2}
    +\frac{\lambda^\alpha}{2\alpha}.
  \end{equation}

At this step, we observe that the formula in the statement of the lemma for $\varphi'$ gives $\varphi'(1)=0$
as the parameters of the $\hgp{3}{2}$ function ensure that it is finite (see Lemma~\ref{le:EulerIntHG} or \cite[eqn. 15.4.20]{NIST:DLMF2022}).
It remains to determine the sign of $\varphi'(\lambda)$ for
  $\lambda\neq1$. Let us consider first the case $d=4$. As 
  \begin{equation}
    \hg\PAR{\frac{1}{2},1;2;\zeta}
    =\sum_{k=0}^\infty\frac{(\frac{1}{2})_k}{(k+1)!}\zeta^k
    =2\frac{1-\sqrt{1-\zeta}}{\zeta},
  \end{equation}
  we get 
  \[
    \hg\PAR{\frac{1}{2},1;2;\zeta}
    =\begin{cases}
      \ds 1+\lambda^2 & \text{if $0\leq\lambda\leq1$}\\[.25em]
      \ds 1+\frac{1}{\lambda^2} & \text{if $\lambda\geq1$}
    \end{cases},
  \]
  and therefore
  \[
    \varphi'(\lambda)
    =\begin{cases}
      \ds -\frac{\lambda}{2}+\frac{\lambda^{\alpha-1}}{2} & \text{if $0\leq \lambda\leq1$}\\[.5em]
      \ds \frac{1-2\lambda^2}{2\lambda^3}+\frac{\lambda^{\alpha-1}}{2} & \text{if $\lambda\geq1$}
    \end{cases}.
  \]
  Alternatively, we could use Lemma \ref{le:phiop} which replaces the Taylor
  series expansion behind the hypergeometric based formulas by the generating
  series of orthogonal polynomials. From this it can be checked that
  $\varphi'(\lambda)<0$ if $0<\lambda<1$ while $\varphi'(\lambda)>0$ if
  $\lambda>1$, first when $\alpha=2$ and then by monotony for all
  $\alpha\geq2$. This proves the desired result for $d=4$.
  For the general
  case $d\geq 4$, noting that $\hg\PAR{\frac{1}{2},1;\frac{d}{2}; z}$, $z\in[0, 1]$, decreases as $d$ increases, an examination of the formula for $\varphi'(\lambda)$
  reveals that as $d$ increases, then $\varphi'(\lambda)$ decreases when $0<\lambda<1$,
  while $\varphi'(\lambda)$ increases when $\lambda>1$,
  which reduces the analysis to the case $d=4$.
\end{proof}

\begin{lemma}\label{le:phi':d=3s=-1a>=2}
  Let $\varphi$ be as in \eqref{eq:sphere:phi}, $(d,s)=(3,-1)$, $\alpha\geq2$,
  and $R=(\frac{2}{3\gamma\alpha})^{\frac{1}{\alpha-1}}$. Then
  \begin{align*}
    \varphi'(\lambda)
    &=R\begin{cases}
      -\frac{2}{3}\lambda+\frac{2}{3}\lambda^{\alpha-1}
      &\text{if $0<\lambda\leq1$}\\
      -1+\frac{1}{3\lambda^2}+\frac{2}{3}\lambda^{\alpha-1}
      &\text{if $\lambda>1$}
    \end{cases}.
  \end{align*}
  Moreover $\varphi'(\lambda)\leq0$ if $0<\lambda<1$, $\varphi'(1)=0$, while
  $\varphi'(\lambda)>0$ if $\lambda>1$.
\end{lemma}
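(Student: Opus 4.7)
The plan is to specialize Lemma~\ref{le:spheres} to $(d,s)=(3,-1)$, reduce the resulting integral to an elementary one, evaluate it in closed form on each of the two regions $0<\lambda\leq 1$ and $\lambda>1$, and then analyze the sign of the resulting expression.

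First, I would note that the factor $(1-t^2)^{(d-3)/2}$ becomes the constant $1$ when $d=3$, so the Gegenbauer weight trivializes and the formula from Lemma~\ref{le:spheres} (combined with the sign convention $K_{-1}=-|\cdot|$ from \eqref{eq:VW}) gives, up to the normalization $\tau_2=1/2$, a kernel contribution to $\varphi'(\lambda)$ proportional to
\[
  I(\lambda):=\int_{-1}^{1}\frac{t-\lambda}{\sqrt{\lambda^2+1-2\lambda t}}\,\dd t.
\]
Next, I would substitute $v=\sqrt{\lambda^2+1-2\lambda t}$, so that $\dd t=-v\,\dd v/\lambda$ and $t-\lambda=(1-\lambda^2-v^2)/(2\lambda)$. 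The endpoints $t=\mp 1$ correspond to $v=\lambda+1$ and $v=|\lambda-1|$, hence
\[
  I(\lambda)=\frac{1}{2\lambda^2}\int_{|\lambda-1|}^{\lambda+1}\bigl[(1-\lambda^2)-v^{2}\bigr]\,\dd v.
\]
This is a pair of polynomial integrals; evaluating them and splitting on $|\lambda-1|$ produces $I(\lambda)=-\frac{4\lambda}{3}$ for $0<\lambda\leq 1$ and $I(\lambda)=\frac{2}{3\lambda^{2}}-2$ for $\lambda>1$.

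Then I would assemble $\varphi'(\lambda)$ by adding the external-field derivative $\gamma\alpha R^{\alpha}\lambda^{\alpha-1}$. Substituting $R=(2/(3\gamma\alpha))^{1/(\alpha-1)}$ gives $\gamma\alpha R^{\alpha}=\frac{2R}{3}$, so the kernel and field pieces combine into exactly the piecewise expression asserted in the lemma.

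Finally, for the sign claim, on $0<\lambda\leq 1$ the formula factors as $\varphi'(\lambda)=\frac{2R}{3}(\lambda^{\alpha-1}-\lambda)$, which is $\leq 0$ because $\alpha-1\geq 1$ and $\lambda\leq 1$, with equality only at $\lambda=1$. For $\lambda>1$ I would set $g(\lambda):=-1+\frac{1}{3\lambda^{2}}+\frac{2}{3}\lambda^{\alpha-1}$, observe $g(1)=0$, and note that
\[
  g'(\lambda)=-\frac{2}{3\lambda^{3}}+\frac{2(\alpha-1)}{3}\lambda^{\alpha-2}
\]
satisfies $g'(\lambda)>0$ for all $\lambda>1$ when $\alpha\geq 2$, since then $(\alpha-1)\lambda^{\alpha-2}\geq 1>\lambda^{-3}$. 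Hence $g$ is strictly increasing on $(1,\infty)$, so $\varphi'(\lambda)=Rg(\lambda)>0$ there. The main (very mild) obstacle is purely bookkeeping—tracking the sign coming from $\mathrm{sign}(s)=-1$ when pulling the convention \eqref{eq:VW} through Lemma~\ref{le:spheres}, and carefully carrying the split at $\lambda=1$ through the substitution—rather than anything analytically delicate.
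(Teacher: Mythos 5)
Your proof is essentially correct and follows the same broad strategy as the paper: specialize Lemma~\ref{le:spheres} to $(d,s)=(3,-1)$, evaluate the resulting elementary integral in closed form on each side of $\lambda=1$, and analyze the sign. The cosmetic difference is that you differentiate under the integral first and then compute $I(\lambda)$ via the substitution $v=\sqrt{\lambda^2+1-2\lambda t}$, whereas the paper evaluates $\int_{-1}^1\sqrt{\lambda^2+1-2\lambda t}\,\dd t$ in closed form (getting the $|\lambda-1|^3$ and $(\lambda+1)^3$ cubics) and then differentiates; these commute and produce the same piecewise formula. For the sign on $\lambda>1$, you show $g'(\lambda)>0$ directly via $(\alpha-1)\lambda^{\alpha-2}\geq1>\lambda^{-3}$, which is a cleaner and more direct version of the paper's trick of showing $(\lambda^2\varphi'(\lambda))'>0$; both are valid.

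Two small points of care. First, you are right that the $\mathrm{sign}(s)=-1$ factor in \eqref{eq:VW} must be tracked: as stated, Lemma~\ref{le:spheres} does not display it, and if you apply the lemma's derivative formula literally with $s=-1$ you get the kernel contribution with the wrong sign (coefficient $-R/2$ rather than $+R/2$ in front of $I(\lambda)$). The paper sidesteps this by writing the $\varphi$ integral with the explicit $-R/2$ prefactor from $K_{-1}=-|\cdot|$; you flag the issue but should make the coefficient fully explicit rather than saying ``proportional to.'' Second, your aside ``with equality only at $\lambda=1$'' on the interval $(0,1]$ is not quite right: when $\alpha=2$ one has $\varphi'(\lambda)=\frac{2R}{3}(\lambda-\lambda)=0$ identically on $(0,1]$, which is why the lemma asserts only the weak inequality $\varphi'(\lambda)\leq 0$ there (and the paper's proof explicitly notes the $\alpha=2$ degeneracy). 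Neither point affects the validity of the argument.
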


\begin{proof}
  From Lemma \ref{le:spheres} we get, for $\lambda\geq0$,
  \begin{align*}
    \varphi(\lambda)
    &=-\frac{R}{2}\int_{-1}^1(\lambda^2-2\lambda t+1)^{1/2}\dd t+\gamma(\lambda R)^\alpha\\
    &=-R\frac{|\lambda-1|^3-(\lambda+1)^3}{6\lambda}+\gamma(\lambda R)^\alpha\\
    &=R\begin{cases}
      -(1+\frac{\lambda^2}{3})+\gamma\lambda^\alpha R^{\alpha-1}&\text{if $0\leq\lambda\leq1$}\\
      -(\lambda+\frac{1}{3\lambda})+\gamma\lambda^\alpha R^{\alpha-1}&\text{if $\lambda>1$}
    \end{cases}.
  \end{align*}
  Hence
  \[
    \varphi'(\lambda)
    =R\begin{cases}
      -\frac{2}{3}\lambda+\gamma\alpha(\lambda R)^{\alpha-1}
      &\text{if $0<\lambda\leq1$}\\
      -1+\frac{1}{3\lambda^2}+\gamma\alpha(\lambda R)^{\alpha-1}
      &\text{if $\lambda>1$}
    \end{cases}.
  \]
  From now on we take $R=(\frac{2}{3\gamma\alpha})^{\frac{1}{\alpha-1}}$,
  which makes the critical value of $\lambda$ on $(0,1]$ equal to $1$. Hence
  \[
    \varphi'(\lambda)
    =R\begin{cases}
      -\frac{2}{3}\lambda+\frac{2}{3}\lambda^{\alpha-1}
      &\text{if $0<\lambda\leq1$}\\
      -1+\frac{1}{3\lambda^2}+\frac{2}{3}\lambda^{\alpha-1}
      &\text{if $\lambda>1$}
    \end{cases}.
  \]
  Now $\varphi'(1)=0$, and moreover $\varphi'(\lambda)<0$ for $0<\lambda<1$
  when $\alpha>2$ while $\varphi'(\lambda)=0$ for $0<\lambda<1$ when
  $\alpha=2$. If $\lambda>1$, then
  $(\lambda^2\varphi'(\lambda))'=R(-2\lambda+\frac{2}{3}(\alpha+1)\lambda^\alpha)>0$
  when $\lambda>(3/(1+\alpha))^{1/(\alpha-1)}$, and this last value is $\leq1$
  since $\alpha\geq2$, which implies that $\varphi'(\lambda)>0$ for
  $\lambda>1$.
\end{proof}

\begin{remark}[General $s$]\label{rm:phiphis}
  Suppose that $s>-2$, $s\neq0$, and $d\geq4$. By proceeding as in the proof
  of Lemma \ref{le:phi':d>=4s=0a>=2}, it is possible to obtain the following
  formulas, for all $\lambda\geq0$,
  \[
    \tau_{d-1}%
    \int_{-1}^1\frac{(1-t^2)^{\frac{d-3}{2}}}{(\lambda^2+1-2\lambda t)^{\frac{s}{2}}}%
    \dd t
    =
      (1+\lambda^2)^{-\frac{s}{2}}%
      \
      \hg\PAR{\frac{s}{4},\frac{s+2}{4};\frac{d}{2};\frac{4\lambda^2}{(1+\lambda^2)^2}}
    \]
    and
    \begin{multline*}
      \tau_{d-1}%
      \int_{-1}^1\frac{(t-\lambda)(1-t^2)^{\frac{d-3}{2}}}{(\lambda^2+1-2\lambda t)^{\frac{s+2}{2}}}%
      \dd t\\=
      \frac{\lambda}{d(1+\lambda^2)^{\frac{s+4}{2}}}\Bigg[-\frac{(1+\lambda^2)}{d}\
      \hg\PAR{\frac{s+2}{4},\frac{s+4}{4};\frac{d}{2};\frac{4\lambda^2}{(1+\lambda^2)^2}}\\%
      +(2+s)\ \hg\PAR{\frac{s+2}{4}+1,\frac{s+4}{4};\frac{d}{2}+1;\frac{4\lambda^2}{(1+\lambda^2)^2}}\Bigg].
  \end{multline*}
  This gives formulas for $\varphi(\lambda)$ and $\varphi'(\lambda)$ via Lemma
  \ref{le:spheres}. Unfortunately, the formula for $\varphi'(\lambda)$ does
  not seem to be monotonic with respect to $d$, and thus one cannot proceed as
  in proof of Lemma \ref{le:phi':d>=4s=0a>=2}.
\end{remark}

When $s$ is an integer, instead of using power series and hypergeometric
functions for the evaluation of the integrals in the formulas for $\varphi$
and $\varphi'$ of Lemma \ref{le:spheres}, we could use alternatively
orthogonal polynomials, which leads for instance when $s=0$ to the
trigonometric formulas of Lemma \ref{le:phiop}.

\begin{lemma}[Trigonometric formulas]\label{le:phiop}
  Let $\varphi$ be as in \eqref{eq:sphere:phi} with $s=0$, $d=4+2m$ where $m$
  is a non-negative integer, and
  $R=(\frac{1}{2\gamma\alpha})^{\frac{1}{\alpha}}$. Then
  \[
    \varphi'(\lambda)
    =\begin{cases}
      \ds\frac{\tau_{2m+3}}{2}\sum_{n=0}^{m}\lambda^{2n+1}\int_0^\pi\sin(\theta)^{2m+1}(\sin((2n+3)\theta)-\sin((2n+1)\theta))\dd\theta %
      +\frac{\lambda^{\alpha-1}}{2}
      &\text{if $0\leq\lambda\leq1$}\\[1em]
      \ds\frac{\tau_{2m+3}}{2}\sum_{n=0}^{m}\frac{1}{\lambda^{2n+3}}\int_0^\pi
      \sin(\theta)^{2m+1}(\sin((2n+3)\theta)-\sin((2n+1)\theta)(2\lambda^2-1))\dd\theta
      +\frac{\lambda^{\alpha-1}}{2}
      &\text{if $\lambda\geq1$}
    \end{cases},
  \]
  where $\tau$ is as in Lemma \ref{le:funkhecke}. In particular, when $d=4$
  ($m=0$), we find
  \[
    \varphi'(\lambda)
    =\begin{cases}
      \ds-\frac{\lambda}{2}+\frac{\lambda^{\alpha-1}}{2}
      &\text{if $0\leq\lambda\leq1$}\\[.5em]
      \ds\frac{1-2\lambda^2}{2\lambda^3}+\frac{\lambda^{\alpha-1}}{2}
      &\text{if $\lambda\geq1$}
    \end{cases},
  \]
  while when $d=6$ ($m=1$), we find
  \[
    \varphi'(\lambda)
    =\begin{cases}
      \ds-\frac{2\lambda}{3}+\frac{\lambda^3}{6}+\frac{\lambda^{\alpha-1}}{2}
      &\text{if $0\leq\lambda\leq1$}\\[.5em]
      \ds\frac{-1+4\lambda^2-6\lambda^4}{6\lambda^5}+\frac{\lambda^{\alpha-1}}{2}
      &\text{if $\lambda\geq1$}
    \end{cases}.
  \]
\end{lemma}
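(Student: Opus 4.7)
My plan is to expand the Poisson-type kernel appearing in Lemma~\ref{le:spheres} into a Fourier series in $\theta$ and to truncate it using the finite spectral content of $\sin^{d-2}\theta$. Starting from Lemma~\ref{le:spheres} with $s=0$ and substituting $t=\cos\theta$ (so that $(1-t^2)^{(d-3)/2}\,dt=\sin^{2m+2}\theta\,d\theta$ after absorbing the Jacobian, since $d-3=2m+1$), the integral becomes
\[
J(\lambda):=\int_0^\pi\frac{(\cos\theta-\lambda)\sin^{2m+2}\theta}{1+\lambda^2-2\lambda\cos\theta}\,d\theta,
\]
so that $\varphi'(\lambda)=\tau_{d-1}J(\lambda)+\gamma\alpha R^\alpha\lambda^{\alpha-1}$. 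The choice $R^\alpha=1/(2\gamma\alpha)$ turns the external-field contribution into $\lambda^{\alpha-1}/2$, which matches the claim; it remains to identify $\tau_{d-1}J(\lambda)$ with the stated finite trigonometric sum.

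For $0\le\lambda<1$, the Chebyshev generating identity
\[
\frac{\cos\theta-\lambda}{1-2\lambda\cos\theta+\lambda^2}=\sum_{n=1}^{\infty}\lambda^{n-1}\cos(n\theta)
\]
(obtained by subtracting $1$ from $\sum_{n\ge 0}T_n(\cos\theta)\lambda^n=(1-\lambda\cos\theta)/(1-2\lambda\cos\theta+\lambda^2)$) converges uniformly in $\theta\in[0,\pi]$ for $\lambda$ bounded below $1$, so term-by-term integration is valid. Because $\sin^{2m+2}\theta$ is a linear combination of $\cos(2k\theta)$ for $0\le k\le m+1$, the orthogonality of cosines on $[0,\pi]$ kills every term except those with $n=2j+2$, $j=0,\ldots,m$. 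Finally, the product-to-sum identity $2\cos((2j+2)\theta)\sin\theta=\sin((2j+3)\theta)-\sin((2j+1)\theta)$ rewrites $\cos((2j+2)\theta)\sin^{2m+2}\theta$ as $\frac{1}{2}\sin^{2m+1}\theta(\sin((2j+3)\theta)-\sin((2j+1)\theta))$, which is exactly the first branch of the lemma.

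For $\lambda>1$ the analogous expansion
\[
\frac{\cos\theta-\lambda}{1-2\lambda\cos\theta+\lambda^2}=-\sum_{n=0}^{\infty}\lambda^{-n-1}\cos(n\theta)
\]
and the same orthogonality yield $\tau_{d-1}J(\lambda)=-\tau_{d-1}\sum_{j=0}^{m+1}\lambda^{-2j-1}I_{2j}$, where $I_{2j}:=\int_0^\pi\cos(2j\theta)\sin^{2m+2}\theta\,d\theta$. Introducing $A_j:=\int_0^\pi\sin((2j+1)\theta)\sin^{2m+1}\theta\,d\theta$, the identity $\sin\theta\cos(2j\theta)=\frac{1}{2}(\sin((2j+1)\theta)-\sin((2j-1)\theta))$ for $j\ge 1$ gives $I_{2j}=\frac{1}{2}(A_j-A_{j-1})$, while $I_0=A_0$, and moreover $A_{m+1}=0$ since $\sin^{2m+1}\theta$ is a finite combination of $\sin((2k+1)\theta)$ for $k\le m$. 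A straightforward telescoping of the resulting sum then brings the expression to
\[
\tau_{d-1}J(\lambda)=\frac{\tau_{d-1}}{2}\sum_{n=0}^{m}\lambda^{-2n-3}\bigl[A_{n+1}-(2\lambda^2-1)A_n\bigr],
\]
which is the second branch of the lemma.

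The main obstacle is precisely this telescoping for $\lambda>1$: one must see that the stray $\lambda^{-1}A_0$ contribution and the truncation $A_{m+1}=0$ combine across consecutive summands to produce the exact coefficient $(2\lambda^2-1)$ in front of $\sin((2n+1)\theta)$. Apart from this reindexing the computation is mechanical. The displayed specializations $d=4$ (i.e.\ $m=0$) and $d=6$ (i.e.\ $m=1$) then follow by inserting the elementary values of $A_0,A_1,A_2$, which are standard integrals of $\sin(k\theta)\sin\theta$ and $\sin(k\theta)\sin^3\theta$ over $[0,\pi]$, and collapsing the finite trigonometric sum to the displayed rational functions of $\lambda$.
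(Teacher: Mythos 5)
Your argument is correct and recovers the lemma exactly; the overall strategy (expand the Poisson-type kernel as a generating series and truncate by orthogonality) is the same as the paper's, but you implement it through a genuinely different decomposition. The paper stays in the $t$-variable and uses the second-kind Chebyshev generating function $(1+\lambda^2-2\lambda t)^{-1}=\sum_{n\geq0} U_n(t)\lambda^n$, treating $(t-\lambda)(1-t^2)^m$ as a degree-$(2m+1)$ test polynomial against the semicircle weight $\sqrt{1-t^2}$, and only afterwards passes to $\theta$ via $U_n(\cos\theta)=\sin((n+1)\theta)/\sin\theta$; for $\lambda>1$ it sidesteps any telescoping by the algebraic split $I(\lambda)=\rho^2 I(\rho)+\rho(\rho^2-1)\int_{-1}^1(1-t^2)^{(d-3)/2}(1+\rho^2-2\rho t)^{-1}\,dt$ with $\rho=1/\lambda$, which reduces to the case already treated. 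You instead work in $\theta$ from the outset and expand the whole fraction $(\cos\theta-\lambda)/(1+\lambda^2-2\lambda\cos\theta)=\sum_{n\geq1}\lambda^{n-1}\cos(n\theta)$ — the first-kind ($T_n$) rather than second-kind generating function — and use the finite cosine spectrum of $\sin^{2m+2}\theta$ to truncate, which is the Fourier-dual of the paper's degree count; for $\lambda>1$ you expand in negative powers and carry out the telescoping with $A_{m+1}=0$ explicitly, and that computation does produce the $(2\lambda^2-1)$ coefficient as claimed. Both routes are equally rigorous; the paper's $\lambda>1$ reduction is slightly slicker, while your cosine-series bookkeeping lands directly on the trigonometric form of the statement without a final pass through the $U_n(\cos\theta)$ identity.
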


\begin{proof}
  In view of Lemma \ref{le:spheres}, it suffices to compute
  \[
    I(\lambda):=\int_{-1}^1\frac{t-\lambda}{\lambda^2+1-2\lambda
      t}(1-t^2)^{\frac{d-3}{2}}\dd t.
  \]
  Let ${(U_n)}_{n\geq0}$ be the Chebyshev orthogonal polynomials of the second
  kind\footnote{Three terms recurrence relation
    $U_{n+1}(t)=2tU_n(t)-U_{n-1}(t)$, $n\geq1$, with $U_0(t)=1$ and
    $U_1(t)=2t$.}, orthogonal with respect to the semicircle weight
  $t\mapsto\sqrt{1-t^2}$ on $[-1,1]$. In order to compute $I(\lambda)$, the
  idea is to exploit their generating series formula, which states, for
  $|t|<1$ and $|\lambda|<1$,
  \[
    \frac{1}{1+\lambda^2-2\lambda t}
    =\sum_{n=0}^\infty U_n(t)\lambda^n.
  \]
  The orthogonality relation states, for all
  polynomial $P$ of degree $k\geq0$ and all $n>k$,
  \[
    \int_{-1}^1P(t)U_n(t)\sqrt{1-t^2}\,\dd t=0.
  \]
  Now, since $m:=\frac{d-4}{2}$ is a non-negative integer, the expression
  $(t-\lambda)(1-t^2)^m$ is a polynomial of degree $k=2m+1$ with respect to
  $t$, and therefore, when $|\lambda|<1$,
  \[
    I(\lambda)
    =\sum_{n=0}^{2m+1}\lambda^n%
    \int_{-1}^1(t-\lambda)(1-t^2)^{m}U_n(t)\sqrt{1-t^2}\,\dd t,
  \]
  where we have used crucially the identity
  $(1-t^2)^{\frac{d-3}{2}}=(1-t^2)^m\sqrt{1-t^2}$. Now, to evaluate the
  integral in the right-hand side above, we use the trigonometric change of
  variable $t=\cos(\theta)$ and the fact that
  $U_n(\cos(\theta))=\sin((n+1)\theta)/\sin(\theta)$, which give
  \[
    \int_{-1}^1(t-\lambda)(1-t^2)^{m}U_n(t)\sqrt{1-t^2}\,\dd t
    =
    \int_0^\pi(\cos(\theta)-\lambda)\sin(\theta)^{2m+1}\sin((n+1)\theta)\dd\theta.
  \]
  It follows that if $0<\lambda<1$, then using standard trigonometric formulas,
  \[
    I(\lambda)=\frac{1}{2}\sum_{n=0}^{m}\lambda^{2n+1}\int_0^\pi
      \sin(\theta)^{2m+1}(\sin((2n+3)\theta)-\sin((2n+1)\theta))\dd\theta.
  \]
  This produces the desired formula for $\varphi'(\lambda)$ when $0<\lambda<1$.

  Let us establish now the formula when $\lambda>1$. Let us set
  $\rho:=1/\lambda$. Then we have
  \[
    I(\lambda)
    =
    \rho^2I(\rho)
    +\rho(\rho^2-1)\int_{-1}^1\frac{(1-t^2)^{\frac{d-3}{2}}}{1+\rho^2-2\rho t}\dd t.
  \]
  Since $0<\rho<1$, proceeding as before for $I(\rho)$, we get for the last integral
  \begin{align*}
    \int_{-1}^1\frac{(1-t^2)^{\frac{d-3}{2}}}{1+\rho^2-2\rho t}\dd t
    &=\sum_{n=0}^{m}\rho^{2n}\int_0^\pi
      \sin(\theta)^{2m+1}\sin((2n+1)\theta)\dd\theta
  \end{align*}
  where the last step comes from symmetry of $\sin$. Hence the desired formulas
  for $\varphi'(\lambda)$.
\end{proof}

\begin{lemma}\label{le:phi':d>=5s=d-4a>=2}
  Let $\varphi$ be as in \eqref{eq:sphere:phi} with $d\geq5$, $s=d-4$,
  $\alpha\geq2$, and $R$ as in Theorem \ref{th:main}. Then
  \[
    \varphi'(\lambda)
    =\begin{cases}
      \ds
      \frac{2s}{(s+4)R^s}(-\lambda+\lambda^{\alpha-1})
      &\text{if $0\leq\lambda\leq1$}\\[1em]
      \ds
      \frac{s}{(s+4)R^s}\Bigr(\frac{2-(s+4)+(s+4)\lambda^2+2\lambda^{s+\alpha+2}}{\lambda^{s+3}}\Bigr)
      &\text{if $\lambda\geq1$}
    \end{cases}.
  \]
  Moreover $\varphi'(\lambda)<0$ if $0<\lambda<1$, $\varphi'(1)=0$, while $\varphi'(\lambda)>0$ if $\lambda>1$.
\end{lemma}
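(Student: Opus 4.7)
My plan is to follow the strategies of Lemmas~\ref{le:phi':d>=4s=0a>=2} and~\ref{le:phiop}, exploiting the fact that the condition $s=d-4$ is precisely what makes the kernel and the weight appearing in Lemma~\ref{le:spheres} dual in the Gegenbauer sense. Set $\nu:=(s+2)/2=(d-2)/2$, so that the weight $(1-t^2)^{(d-3)/2}=(1-t^2)^{\nu-1/2}$ is the orthogonality weight for the Gegenbauer polynomials $\{C_n^\nu\}_{n\geq 0}$, whose generating function reads
\[
  (1-2\lambda t+\lambda^2)^{-\nu}=\sum_{n=0}^\infty C_n^\nu(t)\,\lambda^n,\qquad |\lambda|<1;
\]
after factoring $\lambda^2$ and substituting $\rho:=1/\lambda<1$, one also gets $(1-2\lambda t+\lambda^2)^{-\nu}=\sum_{n\geq 0} C_n^\nu(t)\,\lambda^{-(s+2+n)}$ for $\lambda>1$. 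I would plug these expansions into the integral formula of Lemma~\ref{le:spheres} for $\varphi'(\lambda)$ and collapse the sum via orthogonality of the $C_n^\nu$ against the weight.

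Concretely, introduce $A_n:=\int_{-1}^1 t\,C_n^\nu(t)(1-t^2)^{(d-3)/2}\dd t$ and $B_n:=\int_{-1}^1 C_n^\nu(t)(1-t^2)^{(d-3)/2}\dd t$. Since $C_0^\nu=1$ and $C_1^\nu(t)=2\nu t$, orthogonality forces $B_n=0$ for $n\geq 1$ and $A_n=0$ for $n\neq 1$, leaving only $B_0=1/\tau_{d-1}$ and $A_1$, which a short beta-function evaluation identifies as $A_1=\tfrac{s+2}{s+4}B_0$. Assembling the pieces in Lemma~\ref{le:spheres} gives, on $(0,1)$,
\[
  \varphi'(\lambda)=-\frac{2s}{(s+4)R^s}\,\lambda+\gamma\alpha R^\alpha\,\lambda^{\alpha-1},
\]
and the analogous closed-form rational expression on $(1,\infty)$ asserted in the statement. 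The choice $R^{\alpha+s}=2s/((s+4)\gamma\alpha)$ prescribed by Theorem~\ref{th:main} is precisely what makes $\gamma\alpha R^\alpha=2s/((s+4)R^s)$, producing $\varphi'(1)=0$ and yielding the announced formulas after a simple substitution.

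It remains to verify the sign of $\varphi'$. On $(0,1)$ the inequality $\lambda^{\alpha-1}\leq\lambda$ (for $\alpha\geq 2$) gives $\varphi'(\lambda)\leq 0$, which suffices for the Frostman condition $\varphi\geq\varphi(1)$. For $\lambda>1$, I would set $g(\lambda):=(s+2)-(s+4)\lambda^2+2\lambda^{s+\alpha+2}$, note $g(1)=0$, and compute $g'(\lambda)=2\lambda\bigl[(s+\alpha+2)\lambda^{s+\alpha}-(s+4)\bigr]$. The factor $(s+\alpha+2)\lambda^{s+\alpha}$ is strictly increasing in $\lambda>0$ and equals $s+\alpha+2\geq s+4$ at $\lambda=1$ (since $\alpha\geq 2$), so the bracket is strictly positive for every $\lambda>1$; hence $g'>0$ on $(1,\infty)$, $g>0$ there, and therefore $\varphi'>0$ on $(1,\infty)$. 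The only non-routine step is the handling of the $\lambda>1$ branch via the $\lambda\mapsto 1/\lambda$ rescaling in the generating function expansion, exactly as in the proof of Lemma~\ref{le:phiop}; once that device is in place, everything reduces to elementary orthogonality and beta-function computations, with no need for the hypergeometric machinery used in the $s=0$ case and without the monotonicity-in-$d$ argument ruled out by Remark~\ref{rm:phiphis}.
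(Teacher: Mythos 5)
Your proof is correct and follows essentially the same path as the paper's: expand the kernel in Lemma~\ref{le:spheres} in the Gegenbauer generating series with parameter $\ell=\nu=(s+2)/2$, observe that for $d=s+4$ the weight $(1-t^2)^{(d-3)/2}$ is exactly the Gegenbauer orthogonality weight $(1-t^2)^{\nu-1/2}$, collapse the sum via orthogonality of $C_0^{(\nu)}$ and $C_1^{(\nu)}$, and treat $\lambda>1$ by the $\rho=1/\lambda$ rescaling. Your identification $A_1=\tfrac{s+2}{s+4}B_0$ is correct and reproduces the coefficient $\tfrac{2}{s+4}$ that the paper obtains via the Beta integral.

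Two remarks in your favor. First, your numerator $g(\lambda)=(s+2)-(s+4)\lambda^2+2\lambda^{s+\alpha+2}$ for the $\lambda>1$ branch is the \emph{correct} one: it vanishes at $\lambda=1$, its derivative is $2\lambda\bigl[(s+\alpha+2)\lambda^{s+\alpha}-(s+4)\bigr]$ as you and the paper's own sign analysis both write, and it matches what the paper's intermediate computation actually produces. The numerator displayed in the lemma statement, $2-(s+4)+(s+4)\lambda^2+2\lambda^{s+\alpha+2}$, evaluates to $4\neq 0$ at $\lambda=1$, contradicting the asserted $\varphi'(1)=0$; the sign in front of $(s+4)$ and $(s+4)\lambda^2$ must have been flipped somewhere in typesetting. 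Second, on $(0,1)$ you correctly state $\varphi'\leq 0$ rather than $<0$: when $\alpha=2$ one has $\lambda^{\alpha-1}=\lambda$ so $\varphi'\equiv 0$ on $(0,1)$ (which is still compatible with the Frostman condition, since $\sigma_R$ is concentrated at $\lambda=1$). The strict inequality $\varphi'<0$ in the lemma's statement holds only for $\alpha>2$; the companion Lemma~\ref{le:phi':d=3s=-1a>=2} is phrased with the non-strict $\leq 0$, and this lemma should read the same way.

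Your one shortcut — invoking the rescaled generating series directly for $\lambda>1$ without writing out the split $\lambda-t=(\rho-t)+\tfrac{1-\rho^2}{\rho}$ that the paper uses — is a gloss but not a gap, since the two computations are algebraically identical. Overall the proposal is sound and, if anything, more carefully stated than the lemma it proves.
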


\begin{proof}
  From Lemma \ref{le:spheres} we get
  \[
    \varphi'(\lambda)
    = -\frac{s\tau_{s+3}}{R^s}\int_{-1}^1\frac{(\lambda-t)(1-t^2)^{\frac{s+1}{2}}}{(\lambda^2+1-2\lambda
        t)^{\frac{s+2}{2}}}\dd t+\gamma\alpha R^\alpha\lambda^{\alpha-1}.
  \]
  The idea is to imitate the proof of Lemma \ref{le:phiop}, and compute
  $\varphi'(\lambda)$ using the following generating series, valid for all
  $|t|<1$ and $|\lambda|<1$,
  \[
    \frac{1}{(1+\lambda^2-2\lambda t)^{\ell}}
    =\sum_{n=0}^\infty C^{(\ell)}_n(t)\lambda^n
  \]
  where $(C^{(\ell)}_n)_{n\geq0}$ are the Gegenbauer ultraspherical
  polynomials\footnote{Recurrence relation
    $C^{(\ell)}_{n}(t)=\frac{2t(n+\ell-1)}{n}C^{(\ell)}_{n-1}(t)-(n+2\ell-2)C^{(\ell)}_{n-2}(t)$,
    $n\geq2$, $C^{(\ell)}_0=1$, $C^{(\ell)}_1(t)=2\ell x$. Include Chebyshev
    (both kinds) and Legendre polynomials as special cases with
    $\ell\in\{0,1,1/2\}$.} of parameter $\ell:=\frac{s+2}{2}$, orthogonal on
  $[-1, 1]$ with respect to the measure
  $\dd\mu(t) = (1-t^2)^{\ell-\frac{1}{2}} \dd t
  =(1-t^2)^{\frac{s+1}{2}} \dd t$.
  The choice of $\ell$ is dictated by the formula for $\varphi'(\lambda)$
  above.
  Using $C_0^{(\ell)}=1$, $C_1^{(\ell)}(t)=(s+2)t$ and orthogonality gives
  \begin{align*}
    \int_{-1}^1\frac{(\lambda-t)(1-t^2)^{\frac{s+1}{2}}}{(\lambda^2+1-2\lambda
    t)^{\frac{s+2}{2}}}\dd t
    &=\sum_{n=0}^\infty\lambda^n\int_{-1}^1(\lambda-t)C_n^{(\ell)}(t)\dd\mu(t)\\
    &=\lambda\int_{-1}^1(1-t^2)^{\frac{s+1}{2}}\dd t
      -\lambda\int_{-1}^1tC_1^{(\ell)}(t)\dd\mu(t)\\
    &=\frac{\lambda}{\tau_{s+3}}-\lambda(s+2)\int_{-1}^1t^2(1-t^2)^{\frac{s+1}{2}}\dd t\\
    &=\frac{\lambda}{\tau_{s+3}}-\lambda(s+2)\frac{\Gamma(3/2)\Gamma((s+3)/2)}{\Gamma(3+s/2)}\\
    &=\frac{2\lambda}{(s+4)\tau_{s+3}}.
  \end{align*}
  Hence, for $0<\lambda<1$, we obtain
  \begin{equation*}
    \varphi'(\lambda)
    =-\frac{2s}{(s+4)R^s}\lambda+\gamma\alpha R^\alpha\lambda^{\alpha-1}
    =\frac{2s}{(s+4)R^s}\Bigr(-\lambda+\gamma\alpha\frac{s+4}{2s}R^{s+\alpha}\lambda^{\alpha-1}\Bigr)
  \end{equation*}
  which leads to the desired formula when we take $R=(\frac{2s}{(s+4)\gamma\alpha})^{\frac{1}{s+\alpha}}$.

  Let us consider now the case $\lambda>1$. Denoting
  $\rho:=1/\lambda$, we have 
  \begin{multline*}
    \int_{-1}^1\frac{(\lambda-t)(1-t^2)^{\frac{s+1}{2}}}{(\lambda^2+1-2\lambda
      t)^{\frac{s+2}{2}}}\dd t
    =
    \rho^{s+2}\int_{-1}^1
    \frac{(\rho-t)(1-t^2)^{\frac{s+1}{2}}}
    {(1+\rho^2-2\rho t)^{\frac{s+2}{2}}}\dd t
    +
    \rho^{s+1}(1-\rho^2)\int_{-1}^1
    \frac{(1-t^2)^{\frac{s+1}{2}}}
    {(1+\rho^2-2\rho t)^{\frac{s+2}{2}}}\dd t.
  \end{multline*}
  Now, using the fact that $0<\rho<1$, we get, from the previous computations,
  \[
    \int_{-1}^1\frac{(1-t^2)^{\frac{s+1}{2}}}{(1+\rho^2-2\rho
      t)^{\frac{s+2}{2}}}\dd t
    =\int_{-1}^1(1-t^2)^{\frac{s+1}{2}}\dd t
    =\int_0^1u^{-1/2}(1-u)^{\frac{s+1}{2}}\dd u
    =\frac{1}{\tau_{s+3}}
  \]
  and
  \begin{align*}
    \varphi'(\lambda)
    &=-\frac{s}{R^s}\Bigr(\frac{2}{(s+4)\lambda^{s+3}}-\frac{1}{\lambda^{s+3}}+\frac{1}{\lambda^{s+1}}\Bigr)
      +\gamma\alpha R^\alpha\lambda^{\alpha-1}\\
    &=
      \frac{2s}{(s+4)R^s}
      \Bigr(\frac{2-(s+4)+(s+4)\lambda^2}{2\lambda^{s+3}}
      +\gamma\alpha\frac{s+4}{2s}R^{s+\alpha}\lambda^{\alpha-1}\Bigr).
  \end{align*}
  Hence with $R=(\frac{2s}{(s+4)\gamma\alpha})^{\frac{1}{s+\alpha}}$ we find,
  for $\lambda>1$,
  \[
    \varphi'(\lambda)
    =\frac{s}{(s+4)R^s}\Bigr(\frac{2-(s+4)+(s+4)\lambda^2+2\lambda^{s+\alpha+2}}{\lambda^{s+3}}\Bigr).
  \]

  The method works more generally when $m:=(d-4-s)/2$ is a non-negative
  integer, by using
  $(1-t^2)^{\frac{d-3}{2}}=(1-t^2)^{\frac{d-4-s}{2}}(1-t^2)^{\frac{s+1}{2}}$ where
  $(1-t^2)^{\frac{d-4-s}{2}}=(1-t^2)^m$ is then a polynomial in $t$.

  Note that
  $\lim_{\lambda\to1^+}\varphi'(\lambda)=\lim_{\lambda\to1^-}\varphi'(\lambda)=0$.
  If $0<\lambda<1$ then $\varphi'(\lambda)<0$, while if $\lambda>1$, then the
  derivative of the numerator of the fraction in the formula for
  $\varphi'(\lambda)$ is
  \begin{align*}
    2(\alpha+s+2)\lambda^{\alpha+s+1}-2(s+4)\lambda
    &=2\lambda((\alpha+s+2)\lambda^{\alpha+s}-(s+4))\\
    &>2((\alpha+s+2)-(s+4))\geq0,
  \end{align*}
  hence $\varphi'(\lambda)>0$, which completes the proof.
\end{proof}

\subsection{Case $0 < \alpha < 2$}\label{se:alpha02}

Let $d \geq 4$ and $s = d - 4$. For an arbitrary $R>0$ define
\[
  \mu:=\beta fm_d+(1-\beta)\sigma_R,
\]
where $\beta:=\frac{2-\alpha}{s+2}$ and
$f(x):=\frac{\alpha+s}{R^{\alpha+s}|\sph|}|x|^{\alpha-4}\mathbf{1}_{|x|\leq
  R}$.
The condition $0<\alpha<2$ ensures that $0<\beta<1$ so $\mu$ is a probability
measure. Since $K_s*\mu+V$ is radially symmetric, for $x\in\mathbb{R}^d$,
$x=\lambda R \xh$ with $\lambda>0$ and $\xh\in\sph$, the modified potential
is 
\begin{equation}\label{eq:d>=4,s=d-4,0<a<2}
  \varphi(\lambda)
  =\varphi_\mu(\lambda)
  :=(K_s*\mu+V)(\lambda R \xh)
  = \int  K_s(\lambda R \xh, y) \mu(\dd y) 
    +\gamma R^\alpha\lambda^\alpha.
\end{equation}
The Frostman conditions are satisfied if we show that for some constant $c$,
we have $\varphi(\lambda)=c$ if $0\leq\lambda\leq1$ while
$\varphi(\lambda)\geq c$ if $\lambda\geq1$. Since $\varphi$ is continuous on
$[0,+\infty)$, and differentiable on $(0,+\infty)$, the desired result follows
from the next Lemma. 

\begin{lemma}\label{le:d>=4,s=d-4,0<a<2}
  Let $\varphi$ be as in \eqref{eq:d>=4,s=d-4,0<a<2} with $d\geq 5$, $s=d-4$,
  $0<\alpha<2$, and $R$ as in Theorem \ref{th:main}. Then
  \[
    \varphi'(\lambda)
    =\begin{cases}
      0
      & \text{if $0<\lambda<1$}\\
      \geq0
      & \text{if $\lambda>1$}
    \end{cases}.
  \]
\end{lemma}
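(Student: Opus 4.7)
The plan is to verify the Frostman conditions by direct computation of $\varphi'(\lambda)$, exploiting the fact that the absolutely continuous part $fm_d$ of $\mu$ is itself a superposition of spherical measures. Writing $y=r\yh$ with $r\in[0,R]$ and $\yh\in\sph$, one has
\[
  fm_d=\int_0^R \rho(r)\,\sigma_r\,\dd r,\qquad \rho(r):=\frac{\alpha+s}{R^{\alpha+s}}\,r^{\alpha+s-1},
\]
so that
\[
  (K_s*\mu)(\lambda R \xh)=\beta\int_0^R \rho(r)\,(K_s*\sigma_r)(\lambda R\xh)\,\dd r+(1-\beta)(K_s*\sigma_R)(\lambda R\xh).
\]
Each inner sphere potential is given by the Funk\,--\,Hecke representation of Lemma \ref{le:spheres} (with $R$ replaced by $r$ and $\lambda$ by $\lambda R/r$), producing two different closed forms according to whether $\lambda R<r$ or $\lambda R>r$.

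Next I would differentiate in $\lambda$ and evaluate the resulting integrals in $t$ by the Gegenbauer polynomial technique already used in Lemma \ref{le:phi':d>=5s=d-4a>=2}: with $\ell:=(s+2)/2$, the generating function $(1+z^2-2zt)^{-\ell}=\sum_{n\geq0}C_n^{(\ell)}(t)z^n$ for $|z|<1$, together with the orthogonality of the $C_n^{(\ell)}$ against $(1-t^2)^{(s+1)/2}\dd t$ on $[-1,1]$, yields closed-form expressions for $\partial_\lambda(K_s*\sigma_r)(\lambda R\xh)$ in each regime. In the inside regime $\lambda R<r$ the derivative is linear in $\lambda$, while in the outside regime $\lambda R>r$ it involves negative powers of $\lambda$, exactly as in the dichotomy of Lemma \ref{le:phi':d>=5s=d-4a>=2}.

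For $\lambda\in(0,1)$, I would split the $r$-integral at $r=\lambda R$: the piece $r\in[\lambda R,R]$ contributes the inside formula and the piece $r\in[0,\lambda R]$ contributes the outside formula. The exponent $\alpha+s-1$ in $\rho(r)$ makes these $r$-integrals elementary, and the precise values $\beta=(2-\alpha)/(s+2)$ and $R$ as in \eqref{eq:Rball} are chosen so that they combine with $(1-\beta)$ times the sphere-derivative of Lemma \ref{le:phi':d>=5s=d-4a>=2} and with the external-field derivative $\gamma\alpha R^\alpha \lambda^{\alpha-1}$ to give $\varphi'(\lambda)=0$ identically on $(0,1)$.

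For $\lambda>1$ the $r$-integral sits entirely in the outside regime, since $\lambda R>R\geq r$ throughout. Combining the three contributions produces a mixed polynomial-and-power expression in $\lambda$ whose non-negativity for $\lambda>1$ is established by the same elementary sign analysis as at the end of the proof of Lemma \ref{le:phi':d>=5s=d-4a>=2}, using the constraint $0<\alpha<2$ together with the specific value of $R$. The main obstacle is the algebraic bookkeeping for the interior cancellation: all constants from the sphere potentials, the mixing coefficient $\beta$, and the radius $R$ must be tracked until they collapse exactly, and this is precisely the reason the formula \eqref{eq:Rball} takes the form it does. Once the bookkeeping is in order, the lemma reduces to verifying a handful of explicit identities in $\lambda$.
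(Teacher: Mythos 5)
Your plan matches the paper's proof essentially step for step: the paper also views the absolutely continuous part as a superposition of spherical measures, reduces via the Funk\,--\,Hecke formula to a one-dimensional integral against $(1-t^2)^{(d-3)/2}\,\dd t$, expands the resulting kernel by the generating function of the relevant ultraspherical polynomials, splits the radial integral at $r=\lambda R$ when $0<\lambda<1$, and uses the choice of $\beta$ and $R$ in \eqref{eq:betaf}--\eqref{eq:Rball} to make the three contributions (interior shells, boundary sphere, external field) cancel on $(0,1)$ and combine into a monotone expression for $\lambda>1$. The only cosmetic difference is that the paper carries out the explicit bookkeeping for $d=4$ using Chebyshev polynomials $U_n$ and then remarks that the same computation goes through for $d\geq 5$ with Gegenbauer polynomials $C_n^{(\ell)}$, $\ell=(s+2)/2$, whereas you propose setting up the Gegenbauer machinery from the outset.
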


\begin{proof}
  Let us focus first on the case $d=4$ ($s=0$). We have
  \[
    - \int\log\PAR{|x-y|^2}\mu(\dd y) =I_1(\lambda)+I_2(\lambda)
  \]
  where, using $y = r R \yh$ for an arbitrary $\yh\in\dS^3$,
  \begin{align*}
    I_1(\lambda)
    &:=- \alpha 
    \PAR{1-\frac{\alpha}{2}}
      \int_{\dS^3}\int_0^1r^{\alpha-1}\log\PAR{\lambda^2R^2+r^2R^2-2\lambda R^2r\DOT{z}{u}}
      \dd r\sigma_1(\dd u)\\
    I_2(\lambda)&:=- \frac{\alpha}{2} 
                  \int_{\dS^3}\log\PAR{\lambda^2R^2+R^2-2\lambda R^2\DOT{z}{u}}
                  \sigma_1(\dd u).
  \end{align*}
  By the Funk\,--\,Hecke formula (Lemma \ref{le:funkhecke}), we have
  \begin{align*}
    I_2(\lambda)
    &=-\frac{\alpha}{4}\tau_3\int_{-1}^1\log\PAR{\lambda^2R^2+R^2-2\lambda R^2t}\sqrt{1-t^2}\,\dd t,\\
    I_2'(\lambda)
    &=\frac{\alpha}{2}\tau_3\int_{-1}^1\frac{t-\lambda}{\lambda^2+1-2\lambda t}\sqrt{1-t^2}\,\dd t,
  \end{align*}
  where $\tau_3=\frac{2}{\pi}$ is as in Lemma \ref{le:funkhecke}. Now we
  consider two cases, $0<\lambda<1$, and $\lambda>1$. If $0<\lambda<1$, then
  using the generating
  function\footnote{$(1-2tu+u^2)^{-1}=\sum_{n=0}^\infty U_n(t)u^n$ for all
    $|u|<1$, $U_{n+1}(t)=2tU_n(t)-U_{n-1}(t)$, $U_0:=1$, $U_1(t):=2t$.} for
  the second kind Chebyshev polynomials ${(U_n)}_{n\geq0}$, we get
  \[
    I_2'(\lambda)
    =\frac{\alpha}{2}\tau_3
    \int_{-1}^1\PAR{t-\lambda}\sum_{n=0}^\infty
    U_n(t)\lambda^n\sqrt{1-t^2}\,\dd t
    =-\frac{\alpha}{4}\lambda
  \]
  using $U_0=1$, $U_1(t)=2t$, and the orthonormality relation
  $\tau_3\int_{-1}^1U_n(t)U_m(t)\sqrt{1-t^2}\,\dd t=\ind_{n=m}$.

  Next, if $\lambda>1$, then $0<1/\lambda<1$ and by using the same method we get
  \begin{align*}
    I_2'(\lambda)
    &=\frac{\alpha}{2}\tau_3\frac{\lambda}{\lambda^2}
      \int_{-1}^1\frac{\frac{1}{\lambda}t-1}{1+\PAR{\frac{1}{\lambda}}^2-2\frac{1}{\lambda}t}
      \sqrt{1-t^2}\,\dd t\\
    &=\frac{\alpha}{2\lambda}\tau_3
      \int_{-1}^1\PAR{\frac{1}{\lambda}t-1}\sum_{n=0}^\infty
      U_n(t)\PAR{\frac{1}{\lambda}}^n\sqrt{1-t^2}\,\dd t\\
    &=\frac{\alpha}{2\lambda}\Bigr(\frac{1}{2\lambda^2}-1\Bigr).
  \end{align*}

  Let us consider now $I_1(\lambda)$. We have
  $I_1(\lambda)=\frac{\alpha}{2}(1-\frac{\alpha}{2})J(\lambda)$ where
  \[
    J(\lambda)
    :=-\int_0^1\PAR{\int_{\dS^3}\log\PAR{\lambda^2R^2+r^2R^2-2\lambda
        R^2r\DOT{z}{u}}r^{\alpha-1}\sigma(\dd u)}\dd r
  \]
  for an arbitrary $z\in\dS^3$. Then, by the Funk\,--\,Hecke formula here again,
  \begin{align*}
    J(\lambda)
    &=-\tau_3\int_0^1\PAR{\int_{-1}^1\log\PAR{\lambda^2R^2+r^2R^2-2\lambda R^2rt}\sqrt{1-t^2}\,\dd t}r^{\alpha-1}\dd r,\\
    J'(\lambda)
    &=2\tau_3\int_0^1\PAR{\int_{-1}^1\frac{rt-\lambda}{\lambda^2+r^2-2\lambda rt}\sqrt{1-t^2}\,\dd t}r^{\alpha-1}\dd r.
  \end{align*}
  Now, if $0<\lambda<1$, then, still by using the same method,
  \begin{align*}
    J'(\lambda)
    &=2\tau_3\PAR{\int_0^{\lambda}+\int_{\lambda}^1}\PAR{\int_{-1}^1\PAR{\frac{rt-\lambda}{\lambda^2+r^2-2\lambda rt}}\sqrt{1-t^2}\,\dd t}r^{\alpha-1}\dd r\\
    &\quad +2\tau_3\int_\lambda^1\PAR{\frac{r}{r^2}\int_{-1}^1\PAR{\frac{t-\frac{\lambda}{r}}{\PAR{\frac{\lambda}{r}}^2+1-2\frac{\lambda}{r}t}}\sqrt{1-t^2}\,\dd
      t}r^{\alpha-1}\dd r\\
    &=\frac{2\tau_3}{\lambda}\int_0^\lambda\PAR{\int_{-1}^1\PAR{\frac{r}{\lambda}t-1}\sum_{n=0}^\infty
      U_n(t)\PAR{\frac{r}{\lambda}}^n\sqrt{1-t^2}\,\dd t}r^{\alpha-1}\dd r\\
    &\quad +2\tau_3\int_\lambda^1\PAR{\int_{-1}^1\PAR{t-\frac{\lambda}{r}}\sum_{n=0}^\infty
      U_n(t)\PAR{\frac{\lambda}{r}}^n\sqrt{1-t^2}\,\dd t}r^{\alpha-2}\dd r\\
    &=\frac{2}{\lambda}\int_0^\lambda\PAR{\PAR{\frac{r}{\lambda}}^2\frac{1}{2}-1}r^{\alpha-1}\dd
      r
      +2\int_\lambda^1\PAR{\frac{\lambda}{r}\frac{1}{2}-\frac{\lambda}{r}}r^{\alpha-2}\dd r\\
    &=\frac{8}{\alpha(\alpha-2)(\alpha+2)}\lambda^{\alpha-1}-\frac{1}{\alpha-2}\lambda.
  \end{align*}
  Thus, if $0<\lambda<1$, then
  \begin{align*}
    \varphi'(\lambda)
    &=I_1'(\lambda)+I_2'(\lambda)+\gamma\alpha R^\alpha\lambda^{\alpha-1}\\
    &=\frac{\alpha}{2}\Bigr(1-\frac{\alpha}{2}\Bigr) J'(\lambda)
      -\frac{\alpha}{4}\lambda
      +\gamma\alpha R^\alpha\lambda^{\alpha-1}\\
    &=-\frac{\frac{\alpha}{2}(1-\frac{\alpha}{2})}{\alpha-2}\lambda
      +\frac{\alpha}{2}\Bigr(1-\frac{\alpha}{2}\Bigr)
      \frac{8}{\alpha(\alpha-2)(\alpha+2)}\lambda^{\alpha-1}
      -\frac{\alpha}{4}\lambda+\gamma\alpha R^\alpha\lambda^{\alpha-1}\\
    &=\Bigr(\gamma\alpha R^\alpha-\frac{2}{\alpha+2}\Bigr)\lambda^{\alpha-1}.
  \end{align*}
  Now if we take $R=(\frac{2}{\gamma\alpha(\alpha+2)})^{\frac{1}{\alpha}}$,
  then $\varphi'(\lambda)=0$ for $0<\lambda<1$.

  We now consider $\lambda>1$. With $J(\lambda)$ as before, we have, using again
  the same method,
  \begin{align*}
    J'(\lambda)
    &=2\tau_3\int_0^1\PAR{\frac{\lambda}{\lambda^2}\int_{-1}^1\PAR{\frac{\frac{r}{\lambda}t-1}{1+\PAR{\frac{r}{\lambda}}^2-\frac{2r}{\lambda}t}}\sqrt{1-t^2}\,\dd
      t}r^{\alpha-1}\dd r\\
    &=2\tau_3\int_0^1\PAR{\frac{\lambda}{\lambda^2}\int_{-1}^1\PAR{\frac{r}{\lambda}t-1}\sum_{n=0}^\infty
      U_n(t)\PAR{\frac{r}{\lambda}}^n\sqrt{1-t^2}\,\dd
      t}r^{\alpha-1}\dd r\\
    &=\frac{1}{\lambda^3(\alpha+2)}-\frac{2}{\lambda\alpha}.
  \end{align*}
  Hence, using $R=\PAR{\frac{2}{\gamma\alpha(\alpha+2)}}^{\frac{1}{\alpha}}$,
  and the previously obtained values for $J'(\lambda)$ and $I_2'(\lambda)$, we
  get
  \begin{align*}
    \varphi'(\lambda)
    &=\beta J'(\lambda)+I_2'(\lambda)+\gamma\alpha R^\alpha\lambda^{\alpha-1}\\
    &=\frac{1}{4\lambda^3}\SBRA{\frac{\alpha(2-\alpha)}{\alpha+2}-4\lambda^2+\alpha+\frac{8}{\alpha+2}\lambda^{\alpha+2}}\\
    &=:\frac{G(\lambda)}{4\lambda^3}.
  \end{align*}
  We have $G(1)=\frac{\alpha(2-\alpha)+8}{\alpha+2}-4+\alpha=0$ while
  $G'(\lambda)=-8\lambda+8\lambda^{\alpha+1}=8\lambda(\lambda^\alpha-1)>0$ for
  $\lambda>1$. Hence $\varphi'(\lambda)>0$ for $\lambda>1$ and so
  $\varphi(\lambda)$ is increasing for $\lambda>1$. This ends the proof in the
  case $d=4$. Finally a careful examination of the proof reveals that it still
  works in the case $d\geq 5$ provided that we replace Chebyshev polynomials
  by Gegenbauer polynomials.
\end{proof}

\subsection{Case $d=3$, $s=d-4=-1$, and $1<\alpha<2$}\label{se:d=3etc}
Let $\mu$ be the probability measure on $\mathbb{R}^3$ parametrized by $R>0$,
and given by the mixture (convex combination)
\[
  \mu:=(2-\alpha)fm_3+(\alpha-1)\sigma_R,
\]
where
$f(x):=\frac{\alpha-1}{R^{\alpha-1}|\mathbb{S}^2|}|x|^{\alpha-4}\mathbf{1}_{|x|\leq
  R}$, $m_3$ is the Lebesgue measure on $\mathbb{R}^3$, and $\sigma_R$ is the
uniform probability measure on $\mathbb{S}^2_R$. Since $K_{-1}*\mu+V$ is
radially symmetric, for $x\in\mathbb{R}^3$, $x=\lambda R \xh$ with $\lambda\geq0$
and $\xh\in\sph$, we set
\begin{equation}\label{eq:d=3,s=-1,1<a<3}
  \varphi(\lambda):=(K_{-1}*\mu+V)(x)
  =-(2-\alpha)\int_{B_R} |x-y|f(y)\dd y
  -(\alpha-1)\int_{\sph} |x-R\yh|\sigma_1(\dd \yh)+\gamma R^\alpha\lambda^\alpha.
\end{equation}
The Frostman conditions are satisfied if we show that for some constant $c$,
we have $\varphi(\lambda)=c$ if $0\leq\lambda\leq1$ while
$\varphi(\lambda)\geq c$ if $\lambda\geq1$. Since $\varphi$ is continuous on
$[0,+\infty)$, and differentiable on $(0,+\infty)$, the desired result follows
from Lemma \ref{le:d=3,s=-1,1<a<2}.

\begin{lemma}\label{le:d=3,s=-1,1<a<2}
  Let $\varphi$ be as in \eqref{eq:d=3,s=-1,1<a<3} with $d=3$, $s=-1$,
  $1<\alpha<2$, and $R$ as in Theorem \ref{th:main}. Then
  \[
    \varphi'(\lambda)
    =\begin{cases}
      0 & \text{if $0<\lambda<1$}\\
      \geq0 & \text{if $\lambda>1$}
    \end{cases}.
  \]
\end{lemma}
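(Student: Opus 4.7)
The plan is to mimic the proof of Lemma \ref{le:d>=4,s=d-4,0<a<2}, with the substantial simplification that in $\mathbb{R}^3$ one can dispense with the Gegenbauer/generating-series machinery and evaluate every spherical average in closed form by elementary integration. The key computation is the spherical average
\[
A(\lambda,\rho) := \int_{\dS^2} |\lambda\hat{x}-\rho\hat{y}|\,\sigma_1(\dd\hat{y}) = \frac{1}{2}\int_{-1}^{1}\sqrt{\lambda^2+\rho^2-2\lambda\rho t}\,\dd t = \frac{(\lambda+\rho)^3 - |\lambda-\rho|^3}{6\lambda\rho},
\]
which simplifies to $A(\lambda,\rho) = \rho + \lambda^2/(3\rho)$ when $\lambda\leq\rho$ and to $A(\lambda,\rho) = \lambda + \rho^2/(3\lambda)$ when $\lambda\geq\rho$; this is the same mechanism already used in Lemma \ref{le:phi':d=3s=-1a>=2} for the special case $\rho=1$. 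After scaling $x = \lambda R\hat{x}$ and $y = \rho R\hat{y}$, the ball integral $\int_{B_R}|x-y|f(y)\,\dd y$ reduces to $(\alpha-1)R\int_0^1\rho^{\alpha-2} A(\lambda,\rho)\,\dd\rho$ and the sphere integral $\int_{\dS^2}|x-R\hat{y}|\,\sigma_1(\dd\hat{y})$ to $R\cdot A(\lambda,1)$.

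I would then differentiate $\varphi$ term by term; since $A(\cdot,\rho)$ is continuous in $\lambda$, the kink at $\rho=\lambda$ contributes nothing and one may differentiate under the integral. For $0<\lambda<1$ I split the $\rho$-integral at $\rho=\lambda$ and substitute the appropriate branch of $\partial_\lambda A$; the three elementary antiderivatives of $\rho^{\alpha-2}$, $\rho^{\alpha}$, $\rho^{\alpha-3}$ produce denominators $\alpha-1$, $\alpha+1$, $\alpha-2$, which are all nonzero on $1<\alpha<2$, while the condition $\alpha>1$ is precisely what makes the ball integrand integrable at $\rho=0$. After assembling the three contributions (ball density, uniform sphere, external field), the coefficient of $\lambda$ cancels identically and one is left with
\[
\varphi'(\lambda) = \lambda^{\alpha-1}\Bigl(\gamma\alpha R^\alpha - \frac{2R}{\alpha+1}\Bigr),
\]
which vanishes for the value of $R$ specified in Theorem~\ref{th:main} (note that $\alpha+s+2=\alpha+1$ when $s=-1$).

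For $\lambda>1$, every $\rho\in[0,1]$ satisfies $\rho<\lambda$, so only the single branch $A(\lambda,\rho) = \lambda + \rho^2/(3\lambda)$ appears in the ball integral and $A(\lambda,1) = \lambda + 1/(3\lambda)$ in the sphere contribution. A direct computation with the same value of $R$ yields
\[
\varphi'(\lambda) = \frac{R}{\alpha+1}\Bigl[-(\alpha+1) + \frac{\alpha-1}{\lambda^2} + 2\lambda^{\alpha-1}\Bigr].
\]
The bracket vanishes at $\lambda=1$ (consistently with the value of $\varphi'$ on $(0,1)$), and its $\lambda$-derivative equals $2(\alpha-1)\lambda^{-3}(\lambda^{\alpha+1}-1)$, which is strictly positive on $(1,\infty)$ because $\alpha>1$; hence $\varphi'(\lambda)>0$ for $\lambda>1$, as required.

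I do not foresee any serious obstacle; the only delicate step is the cancellation of the $\lambda$-coefficient on $(0,1)$, which depends on three partial-fraction identities involving $\tfrac{1}{\alpha-1}$, $\tfrac{1}{\alpha+1}$, $\tfrac{1}{\alpha-2}$ locking together. The hypothesis $1<\alpha<2$ enters exactly where expected: it makes these denominators nonzero, ensures integrability of $\rho^{\alpha-2}$ at $0$, and guarantees that the mixture weight $\alpha-1\in(0,1)$ is positive.
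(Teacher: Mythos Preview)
Your proposal is correct and follows essentially the same route as the paper: both compute the spherical average $A(\lambda,\rho)$ in closed form via the elementary antiderivative $\int\sqrt{\lambda^2+\rho^2-2\lambda\rho t}\,\dd t$, integrate over $\rho$ to handle the ball part, and arrive at identical formulas for $\varphi'$ on $(0,1)$ and on $(1,\infty)$. The only cosmetic difference is that the paper first obtains a closed form for $\varphi(\lambda)$ and then differentiates, whereas you differentiate under the integral sign first; the paper's computation avoids ever seeing the intermediate $\tfrac{1}{\alpha-2}$ denominator, but since that factor cancels in your assembly this is immaterial.
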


\begin{proof}
  By the Funk\,--\,Hecke formula (Lemma \ref{le:funkhecke}),
  \begin{align*}
    \int|x-y|f(y)\dd y
    &=
      \frac{\alpha-1}{R^{\alpha-1}}\int_0^1\Bigr(\int_{\mathbb{S}^2}\sqrt{\lambda^2R^2-2R^2r\lambda
      z\cdot y-r^2R^2}\,\sigma_1(\dd y)\Bigr)R^{\alpha-1}r^{\alpha-2}\dd r\\
    &=(\alpha-1)\frac{R}{2}\int_0^1\Bigr(\int_{-1}^1\sqrt{\lambda^2-2r\lambda
      t+r^2}\dd t\Bigr)r^{\alpha-2}\dd r\\
    &=(\alpha-1)\frac{R}{6}\int_0^1\Bigr(\frac{(r+\lambda)^3-\left|r-\lambda\right| ^3}{\lambda}\Bigr)r^{\alpha-3}\dd r,
  \end{align*}
  while
  \begin{align*}
    \int|x-Ry|\sigma_1(\dd y)
    &=\int_{\mathbb{S}^2}\sqrt{\lambda^2R^2-2R^2\lambda z\cdot y+R^2}\,\sigma_1(\dd y)\\
    &=\frac{R}{2}\int_{-1}^1\sqrt{\lambda^2-2\lambda t+1}\,\dd t\\
    &=\frac{R}{6}\Bigr(\frac{(1+\lambda)^3-\left|1-\lambda\right|^3}{\lambda}\Bigr)
  \end{align*}
  which gives after some computations
  \[
    \varphi(\lambda)
    =\gamma R^\alpha\lambda^\alpha
    +R\begin{cases}
      \displaystyle
      \frac{1-\alpha }{(1+\alpha)\lambda}-\lambda
      & \text{if $\lambda\geq 1$}\\[1em]
      \displaystyle
      -\frac{2 \left(\alpha ^2+\lambda ^{\alpha }-1\right)}{\alpha  (\alpha +1)}
      & \text{if $\lambda\leq 1$}
    \end{cases};
  \]
  thus
  \[
    \varphi'(\lambda)
    =\gamma\alpha R^\alpha\lambda^{\alpha-1}
    +R\begin{cases}
      \displaystyle
      \frac{\alpha-1}{(\alpha+1)\lambda^2}-1
      & \text{if $\lambda>1$}\\[1em]
      \displaystyle
      -\frac{2}{\alpha+1}\lambda^{\alpha-1}
      & \text{if $0<\lambda<1$}
    \end{cases}.
  \]
  The condition $\varphi'(\lambda)=0$ when $0<\lambda<1$ forces
  $R=\Bigr(\frac{2}{\gamma\alpha(\alpha+1)}\Bigr)^{\frac{1}{\alpha-1}}$, and
  with this choice, for $\lambda>1$,
  \[
    \varphi'(\lambda)
    =R\Bigr(\frac{2}{\alpha+1}\lambda^{\alpha-1}
    -1+\frac{\alpha-1}{(\alpha+1)\lambda^2}\Bigr).
  \]
  We have
  \(
  \displaystyle
  \lim_{\lambda\to1^+}\varphi'(\lambda)
  =R\Bigr(\frac{2}{\alpha+1}-1+\frac{\alpha-1}{\alpha+1}\Bigr)
  =0,
  \)
  while for $\lambda>1$,
  \[
    \varphi''(\lambda)
    =2R\frac{\alpha-1}{\alpha+1}(\lambda^{\alpha-2}-\lambda^{-3})>0.
  \]
\end{proof}

\appendix

\section{Useful tools}

The (generalized) hypergeometric function, when it makes sense, is given by
\begin{equation}\label{eq:hype}
  \hgp{p}{q}(a_1,\ldots,a_p;b_1,\ldots,b_q;z)
  :=\sum_{k=0}^\infty
  \frac{(a_1)_k\cdots(a_p)_k}{(b_1)_k\cdots(b_q)_k}
  \frac{z^k}{k!},
\end{equation}
where $a_1,\ldots,a_p,b_1,\ldots,b_q,z\in\mathbb{C}$,
$(z)_k:=z(z+1)\cdots(z+k-1)$ is the Pochhammer symbol for rising factorial,
with convention $(z)_0:=1$ if $z\neq0$. If $\Re(z)>0$ then
$(z)_k=\Gamma(z+k)/\Gamma(z)$. The series is a finite sum when at least one of the $a_i$'s is a negative integer. It is undefined if
at least one of the $b_i$'s is a negative integer, and we exclude this
somewhat trivial situation from now on. If $p=q+1$ then the series converges if $|z|<1$. If $p<q+1$ then it converges for all $z$, while if $p>q+1$ then it diverges for all $z$ as soon as none of the $a_i$'s is negative integer. We primarily use $(p,q)=(2,1)$ (the Gauss hypergeometric function) and
$(p,q)=(3,2)$.

The following lemma states that $\hg$ appears as the series expansion of a certain Euler type integral, which follows essentially 
by using the binomial series expansion $(1-zu)^{-a}=\sum_{n=0}^\infty\binom{-a}{n}(-zu)^n$
together with classical Euler Beta integrals. This is useful for the handling
of certain of our integrals.

\begin{lemma}[Euler integral formula for $\hg$,
  see {\cite[Th.\ 2.2.1,~p.~65]{zbMATH01231230} or
    \cite[Eq.~15.3.1]{MR0167642}}]\label{le:hypeuler}
    \label{le:EulerIntHG}
  For all $a,b,c,z\in\mathbb{C}$ with $\Re(c)>\Re(b)>0$ and $|z|<1$,
  \[
    \int_0^1u^{b-1}(1-u)^{c-b-1}(1-zu)^{-a}\dd u
    =\frac{\Gamma(b)\Gamma(c-b)}{\Gamma(c)}
    \hg(a,b;c;z).
  \]
\end{lemma}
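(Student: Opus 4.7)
The plan is to expand the factor $(1-zu)^{-a}$ as a power series in $zu$, interchange the sum and the integral, and recognize each coefficient as a classical Euler Beta integral. Since $|z|<1$ and $u\in[0,1]$ imply $|zu|<1$, the generalized binomial series gives
\[
(1-zu)^{-a}=\sum_{n=0}^\infty\binom{-a}{n}(-zu)^n=\sum_{n=0}^\infty\frac{(a)_n}{n!}(zu)^n,
\]
where the identity $(-1)^n\binom{-a}{n}=(a)_n/n!$ follows directly from the definition of the Pochhammer symbol by pulling a sign out of each of the $n$ factors.

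The main technical point, and essentially the only one, is to justify the interchange of sum and integral. My plan is to invoke dominated convergence (equivalently Fubini--Tonelli) against the bound $|(a)_n/n!|\,|z|^n\,u^{\Re(b)-1}(1-u)^{\Re(c-b)-1}$: the weight $u^{\Re(b)-1}(1-u)^{\Re(c-b)-1}$ is integrable on $(0,1)$ precisely because $\Re(b)>0$ and $\Re(c-b)>0$ (which are the stated hypotheses), while standard Stirling asymptotics yield $|(a)_n/n!|=O(n^{\Re(a)-1})$, so the numerical series $\sum_n |(a)_n/n!|\,|z|^n$ converges whenever $|z|<1$. Once the interchange is secured, the integral becomes
\[
\sum_{n=0}^\infty\frac{(a)_n}{n!}z^n\int_0^1 u^{b+n-1}(1-u)^{c-b-1}\dd u=\sum_{n=0}^\infty\frac{(a)_n}{n!}z^n\frac{\Gamma(b+n)\Gamma(c-b)}{\Gamma(c+n)},
\]
where the inner integral is identified by the classical Beta--Gamma formula $B(b+n,c-b)=\Gamma(b+n)\Gamma(c-b)/\Gamma(c+n)$.

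The final step is cosmetic: using the functional equation $\Gamma(x+n)=(x)_n\Gamma(x)$ (valid for $x=b$ and $x=c$ since $\Re(c)>\Re(b)>0$), I would rewrite $\Gamma(b+n)/\Gamma(c+n)=(b)_n\Gamma(b)/((c)_n\Gamma(c))$, factor the constant $\Gamma(b)\Gamma(c-b)/\Gamma(c)$ outside the sum, and match what remains against the defining series \eqref{eq:hype} of $\hg(a,b;c;z)$. No obstacle is expected beyond the interchange of summation and integration, and the three hypotheses $\Re(b)>0$, $\Re(c-b)>0$, $|z|<1$ are tailored exactly to make that step routine.
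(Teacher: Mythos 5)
Your proof is correct and follows exactly the approach the paper itself sketches just before the lemma: expand $(1-zu)^{-a}$ via the generalized binomial series and integrate termwise against Euler Beta integrals. The paper merely cites references for this classical fact, so your write-up just fills in the details of that one-line sketch (and the Fubini justification and $\Gamma(x+n)=(x)_n\Gamma(x)$ rewriting are handled correctly).
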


This formula allows $\hg(a,b;c;z)$ to be defined, by analytic continuation, for all $z\in\mathbb{C}\setminus[1,+\infty)$.
 Additionally, if $\Re(c - a - b) > 0$, then
  the series \eqref{eq:hype} for $\hg$ converges absolutely at $z = 1$ and
 \[
   \hg(a,b;c;1) = \frac{\Gamma(c) \Gamma(c-a-b)}{\Gamma(c-a) \Gamma(c-b)}.
 \]

Our main tool to reduce multivariate integrals into univariate integrals is
the Funk\,--\,Hecke formula, that gives the projection on any diameter of the uniform distribution on the sphere.

\begin{lemma}[Funk\,--\,Hecke formula, see {\cite[p.~18]{MR0199449}, \cite[Eq.~(5.1.9),~p.~197]{MR3970999}}]
  \label{le:funkhecke}
  Let $\sigma_1$ denote the uniform probability measure on $\sph$, $d\geq2$.
  Then, for all $\xh\in\sph$, 
  \[
    \int_{\sph}
    f(\xh\cdot \yh)\sigma_1(\dd \yh)
    =\tau_{d-1}\int_{-1}^1f(t)(1-t^2)^{\frac{d-3}{2}}\dd t,
  \]
  where
  \[
    \tau_{d-1}
    :=
    \PAR{\int_{-1}^1(1-t^2)^{\frac{d-3}{2}}\dd t}^{-1}
    =
    \frac{\Gamma(\frac{d}{2})}{\Gamma(\frac{1}{2})\Gamma(\frac{d-1}{2})}.
  \]
\end{lemma}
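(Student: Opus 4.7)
The strategy is to reduce both integrals in the definition of $\varphi(\lambda)$ to one-dimensional integrals via the Funk--Hecke formula (Lemma~\ref{le:funkhecke}), evaluate them in closed form, and then differentiate. Writing $y=r\yh$ in spherical coordinates and inserting the explicit density of $f$, the ball integral becomes
\[
  \int_{B_R}\ABS{x-y}f(y)\dd y
  =(\alpha-1)\frac{R}{2}\int_0^1\PAR{\int_{-1}^1\sqrt{\lambda^2-2r\lambda t+r^2}\,\dd t}r^{\alpha-2}\dd r,
\]
while the sphere integral reduces to the analogous expression with $r=1$ and without the outer $r$-integration. The inner $t$-integral can be computed by substituting $u=\lambda^2-2r\lambda t+r^2$ to obtain the closed form $\frac{(r+\lambda)^3-\ABS{r-\lambda}^3}{3r\lambda}$.

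The next step is to carry out the remaining $r$-integration. For the sphere integral this is immediate and produces $R(1+\lambda^2/3)$ when $\lambda\leq 1$ and $R(\lambda+1/(3\lambda))$ when $\lambda\geq1$. For the ball integral the computation splits naturally on the sign of $r-\lambda$. When $\lambda\geq1$ we have $r\leq1\leq\lambda$ throughout, so $\ABS{r-\lambda}=\lambda-r$ and the $r$-integral collapses to a sum of two power integrals. When $0<\lambda<1$ one must split the $r$-range at $r=\lambda$, producing four antiderivatives with exponents involving $\alpha-1$, $\alpha$, and $\alpha-2$; assembling them and combining with the sphere contribution and $\gamma R^\alpha\lambda^\alpha$ should yield the piecewise closed forms
\[
  \varphi(\lambda)=\gamma R^\alpha\lambda^\alpha+R\begin{cases}-\frac{2(\alpha^2+\lambda^\alpha-1)}{\alpha(\alpha+1)}&\lambda\leq1\\[.3em]\frac{1-\alpha}{(1+\alpha)\lambda}-\lambda&\lambda\geq1\end{cases}.
\]
Differentiation then gives the expression for $\varphi'(\lambda)$ stated in the heuristic summary at the end of Section~\ref{se:d=3etc}.

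From there the remaining work is straightforward. The condition $\varphi'(\lambda)=0$ on $(0,1)$ forces $\gamma\alpha R^\alpha=\frac{2R}{\alpha+1}$, i.e.\ $R=\bigr(\frac{2}{\gamma\alpha(\alpha+1)}\bigr)^{1/(\alpha-1)}$, which is precisely the radius $R$ in Theorem~\ref{th:main}. With this choice, one checks that the pieces of $\varphi'$ match at $\lambda=1$: the boundary value is $R\bigr(\frac{2}{\alpha+1}-1+\frac{\alpha-1}{\alpha+1}\bigr)=0$. Finally, for $\lambda>1$, one computes $\varphi''(\lambda)=\frac{2R(\alpha-1)}{\alpha+1}(\lambda^{\alpha-2}-\lambda^{-3})$, which is strictly positive since $\alpha-2>-3$ and $\lambda>1$; combined with $\varphi'(1)=0$ this gives $\varphi'(\lambda)>0$ on $(1,\infty)$.

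The main obstacle is the bookkeeping in the $0<\lambda<1$ case: splitting the $r$-integration at $r=\lambda$ generates four separate power integrals whose exponents $\alpha-2$, $\alpha-1$, $\alpha$ straddle the integrability thresholds (in particular $\alpha-3<-1$ in one piece is avoided only by the factor $r^3$ from $(r+\lambda)^3-(\lambda-r)^3$), and the resulting terms combine into the remarkably simple form above only after noticeable cancellation. Checking the cancellation carefully is the one arithmetic hurdle; all other steps are routine applications of Funk--Hecke, elementary calculus, and verification of the Frostman conditions via the already-established sign analysis.
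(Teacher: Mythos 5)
Your proposal does not prove the statement at hand. The statement is the Funk--Hecke formula of Lemma~\ref{le:funkhecke}, which asserts that for $Y$ uniform on $\sph$ and fixed $\xh\in\sph$, the law of $\xh\cdot Y$ on $[-1,1]$ has density $\tau_{d-1}(1-t^2)^{(d-3)/2}$. What you have written instead is a proof of Lemma~\ref{le:d=3,s=-1,1<a<2}: you \emph{invoke} Funk--Hecke as a given tool, reduce the potential of the mixture $\mu$ to one-dimensional integrals, evaluate them, recover the radius $R=\bigl(\tfrac{2}{\gamma\alpha(\alpha+1)}\bigr)^{1/(\alpha-1)}$, and verify the Frostman conditions via $\varphi'(1)=0$ and $\varphi''>0$ for $\lambda>1$. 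That argument is essentially the same as the paper's proof of Lemma~\ref{le:d=3,s=-1,1<a<2} and is correct as far as it goes, but it is a proof of the wrong lemma.

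For the record, the paper does not prove Lemma~\ref{le:funkhecke}; it is a classical fact cited from M\"uller and Borodachov--Hardin--Saff. If you wanted a self-contained argument, the standard route is to fix $\xh$, write each $\yh\in\sph$ as $\yh=t\,\xh+\sqrt{1-t^2}\,\hat{w}$ with $t=\xh\cdot\yh\in[-1,1]$ and $\hat{w}$ a unit vector in the equatorial $(d-2)$-sphere $\dS^{d-2}\subset\xh^\perp$, and observe that in these coordinates the rotation-invariant probability measure $\sigma_1$ disintegrates as
\[
\sigma_1(\dd\yh)=\tau_{d-1}\,(1-t^2)^{\frac{d-3}{2}}\,\dd t\otimes\sigma_{\dS^{d-2}}(\dd\hat{w}),
\]
the Jacobian factor coming from the $(d-2)$-dimensional surface area of the latitude sphere of Euclidean radius $\sqrt{1-t^2}$. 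Integrating a function $f(\xh\cdot\yh)$, which depends only on $t$, the $\hat{w}$-integral drops out and leaves $\tau_{d-1}\int_{-1}^1 f(t)(1-t^2)^{(d-3)/2}\dd t$. The normalization constant is the Beta integral $\int_{-1}^1(1-t^2)^{(d-3)/2}\dd t=B\bigl(\tfrac12,\tfrac{d-1}{2}\bigr)=\Gamma(\tfrac12)\Gamma(\tfrac{d-1}{2})/\Gamma(\tfrac d2)$, giving the stated $\tau_{d-1}$. That is the computation you would need to supply.
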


  In probabilistic terms, this means that if $Y$ is a random vector of
  $\mathbb{R}^d$ uniformly distributed on $\sph$ then for all
  $\xh\in\sph$, the law of $\xh\cdot Y$ has density
  $\tau_{d-1}(1-t^2)^{\frac{d-3}{2}}\ind_{t\in[-1,1]}$. This is an arcsine law
  when $d=2$, a uniform law when $d=3$, a semicircle law when $d=4$, and more
  generally, for arbitrary values of $d\geq2$, the image law by the map
  $u\mapsto\sqrt{u}$ of a beta law.

\begin{remark}[Scale invariance and homogeneous external
  field]\label{rm:scale}
  Let $\meq^\gamma$ be the equilibrium measure associated with $K_s$, $s>-2$,
  and $V=\gamma\ABS{\cdot}^\alpha$, $\alpha>0$, $\gamma>0$. In some sense,
  $\alpha$ is a shape parameter while $\gamma$ can be either a shape or scale
  parameter. Indeed, the homogeneity of $K_s$ and $V$ give
  \[
    \meq^\gamma
    =\mathrm{dilation}_{\gamma^{-1/\alpha}}(\meq^{\gamma^{-s/\alpha}}),
  \]
  and in particular if $s=0$ then
  $\meq^\gamma=\mathrm{dilation}_{\gamma^{-1/\alpha}}(\meq^1)$, where
  $\mathrm{dilation}_c(\mu)$ stands for the push forward of $\mu$ by the map
  $x\mapsto cx$. Such scaling properties play a role in various problems, see
  for instance Saff and Totik~\cite[Section IV.4]{MR1485778} and Hedenmalm and
  Makarov~\cite{MR3056295}.
\end{remark}

\begin{figure}[htbp]
  \centering
  \includegraphics[width=.7\textwidth,trim=2cm 8.5cm 2cm 8.5cm,clip]{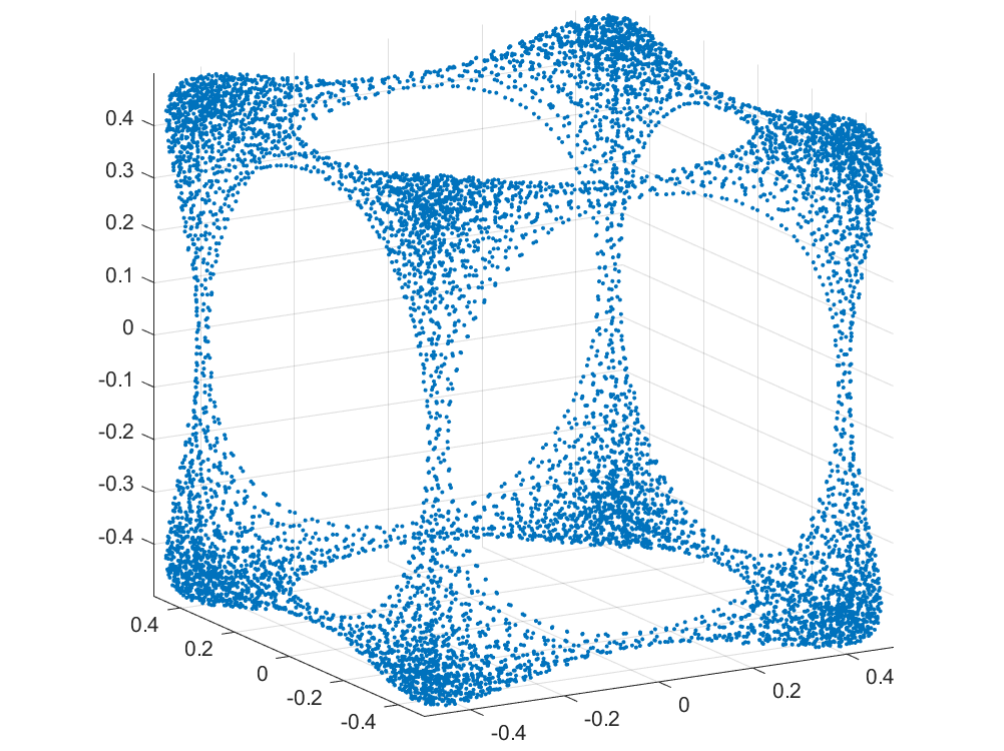}
  \caption{\label{fi:p}Projection on $x_1=0$ of a discrete numerical approximation of
    the equilibrium measure $\meq$ with $N = 10^4$ in the case where $s=0$,
    $d=4$, and $V=\ABS{\cdot}_4^4$.}
\end{figure}


\renewcommand{\MR}[1]{}%
\bibliographystyle{abbrv}%
\bibliography{hdep}%
\end{document}